\theoremstyle{thmstyleone}
\newtheorem{theorem}{Theorem}
\newtheorem{proposition}[theorem]{Proposition}
\newtheorem{lemma}{Lemma}
\theoremstyle{thmstyletwo}
\theoremstyle{thmstylethree}
\newtheorem{definition}{Definition}
\newtheorem{corollary}{Corollary}
\newtheorem{problem}{Problem}
\newtheorem{notation}{Notation}
\newtheorem{assumption}{Assumption}
\newcommand{\cal}{\mathcal}
\begin{document}

\journaltitle{PNAS Nexus}
\DOI{DOI HERE}
\copyrightyear{2022}
\pubyear{2022}
\access{Advance Access Publication Date: Day Month Year}
\appnotes{Manuscript}

\firstpage{1}

\title{Dimension reduction of dynamics on modular and heterogeneous directed networks}

\author[a,b,$\ast$]{Marina Vegu\'{e}}
\author[a,b]{Vincent Thibeault}
\author[a,b,c]{Patrick Desrosiers}
\author[a,b]{Antoine Allard}

\authormark{Vegu\'{e} et al.}

\address[a]{\orgdiv{D\'{e}partement de physique, de g\'{e}nie physique et d'optique}, \orgname{Universit\'{e} Laval}, \orgaddress{\street{2325 rue de l'Universit\'{e}}, \postcode{G1V 0A6}, \state{Qu\'{e}bec}, \country{Canada}}}
\address[b]{\orgdiv{Centre interdisciplinaire en mod\'{e}lisation math\'{e}matique}, \orgname{Universit\'{e} Laval}, \orgaddress{\street{2325 rue de l'Universit\'{e}}, \postcode{G1V 0A6}, \state{Qu\'{e}bec}, \country{Canada}}}
\address[c]{\orgdiv{CERVO Brain Research Center}, \orgaddress{\street{2301 avenue d'Estimauville}, \postcode{G1E 1T2}, \state{Qu\'{e}bec}, \country{Canada}}}

\corresp[$\ast$]{
\href{email:mavel15@ulaval.ca}{mavel15@ulaval.ca}
}

\abstract{
Dimension reduction is a common strategy to study non-linear dynamical systems composed by a large number of variables. The goal is to find a smaller version of the system whose time evolution is easier to predict while preserving some of the key dynamical features of the original system. Finding such a reduced representation for complex systems is, however, a difficult task. We address this problem for dynamics on weighted directed networks, with special emphasis on modular and heterogeneous networks. We propose a two-step dimension-reduction method that takes into account the properties of the adjacency matrix. First, units are partitioned into groups of similar connectivity profiles. Each group is associated to an observable that is a weighted average of the nodes' activities within the group. Second, we derive a set of conditions that must be fulfilled for these observables to properly represent the original system's behavior, together with a method for approximately solving them. The result is a reduced adjacency matrix and an approximate system of ODEs for the observables' evolution. We show that the reduced system can be used to predict some characteristic features of the complete dynamics for different types of connectivity structures, both synthetic and derived from real data, including neuronal, ecological, and social networks.  Our formalism opens a way to a systematic comparison of the effect of various structural properties on the overall network dynamics. It can thus help to identify the main structural driving forces guiding the evolution of dynamical processes on networks.
}

\keywords{dimension reduction, networks, non-linear dynamics, community structure, spectral decomposition}

\maketitle

\section{Introduction}

Dimension-reduction methods seek to find a low-dimensional representation of a high-dimensional system. This representation should preserve various key properties of the original system while being more amenable to analysis in order to provide insights on the inner workings and the long-term behavior of the system. 
The dimension of a proper reduction can also inform on the effective dimension of the original system, namely the extent to which it can be compressed into a simpler form. These reasons make dimension reduction not only necessary for practical or computational purposes but also interesting from a purely theoretical standpoint.

Dimension-reduction methods appear in many areas of science and under different names. For example, the ubiquity of high-dimensional data and the difficulty of extracting the relevant patterns have made dimension reduction an essential problem in data analysis~\cite{Donoho2000, Aggarwal2014, Brunton2019}. Dimension reduction is also relevant in the context of linear control systems, where it is referred to as \emph{model order reduction}~\cite{Antoulas2005, Schilders2008}. 
Methods to reduce the dimension of systems of ordinary differential equations (ODEs) have a relatively long tradition in chemistry, where they are often known as \emph{lumping}~\cite{Wei1969, Kuo1969, Li1990}. Yet, these latter works focus on the kinetic theory of molecular reactions and the methods are often applied to dynamics involving only a few number of molecules.

The complexity inherent to large, non-linear dynamical systems of interacting units (ecological communities, neuronal assemblies, etc.) makes them very difficult to study~\cite{mitchell2009complexity, thurner2018introduction, ladyman2020complex}. One of the big challenges of network science is to find ways to approximate these systems by ones of reduced dimension so as to make them be more tractable both analytically and computationally. 
How to construct a reduced version of a generic complex system is, however, still an open question~\cite{cheng2021model}. Dimension reduction is especially challenging when the original system is highly heterogeneous, i.e., when the rules that define the units' dynamics and their interactions vary largely across the different units.  A standard way to model the influence that each unit (or node) has on the activity of the others is by means of a weighted adjacency matrix plus a set of coupling functions of the nodes' activities~\cite{boccaletti2006complex, barzel2013universality}. The adjacency matrix can be either a constant of the system or change in time depending on the nodes' activity, as in plastic neuronal networks~\cite{gerstner2002mathematical, zenke2017hebbian}. According to this framework, a heterogeneous system is a system in which either the adjacency matrix or the functions that define the dynamics exhibit a large variability across the different nodes.

Several dimension-reduction strategies for dynamics on networks have been proposed in recent years. Gao~et~al.~\cite{gao_universal_2016} developed a method for reducing an $N$-dimensional system to a 1-dimensional one that approximately models the temporal evolution of an effective activity variable. This variable is an average of the nodes' activities, weighted by their outgoing degrees. This approach was used to predict the resilience of different real-world networks under several types of perturbations. Jiang~et~al.~\cite{jiang_predicting_2018} defined a strategy for predicting the resilience and tipping points of bipartite plant-pollinator networks in which the reduced system is 2-dimensional.  The two variables in the reduced dynamics then correspond to the (weighted) average abundance of the plants and pollinators, respectively. Recently, Tu~et~al.~\cite{tu_dimensionality_2021} proposed a dimension reduction framework designed for systems that are heterogeneous in terms of the functions that define the self-dynamics and the coupling-dynamics of the nodes. The reduced dynamics in this case is 1-dimensional and incorporates a variable number of control parameters.

Although 1- and 2-dimensional reductions have been proven to be effective in some cases, the large heterogeneity of some real-world networks \cite{broido2019scale} and their tendency to form intricate community structures \cite{girvan2002community, sporns_modular_2016} make them unlikely to be well understood by means of systems of such a low dimension. References~\cite{laurence_spectral_2019, thibeault_threefold_2020} introduced formalisms for reducing an $N$-dimensional system to a system of arbitrary dimension $n < N$. The $n$ variables of the reduced system are constructed by taking into account the spectral properties of the adjacency matrix (as well as the matrix of self-coupling dynamical parameters in Ref.~\cite{thibeault_threefold_2020}), regardless of the specific form of the coupling functions. The reduction method defined in Ref.~\cite{laurence_spectral_2019} is appropriate for undirected networks with weakly coupled communities or with a bipartite structure.  Reference~\cite{thibeault_threefold_2020} generalized the formalism to dynamical systems with heterogeneous dynamical parameters on generic undirected networks, including the ones with strongly coupled modules. Both approaches have been tested on undirected networks with homogeneous group connectivity, which is also a very common structural assumption to study synchronization of coupled oscillators through dimension reduction~\cite{Ott2008a, Pikovsky2008, Gfeller2008, Hancock2018a, Snyder2020a}. It remained unclear, however, how these approaches could be adapted to reduce the dimension of dynamics on directed networks with significant heterogeneous connectivity.

We develop a dimension-reduction method for dynamical systems on directed networks that are possibly modular but whose modules are significantly heterogeneous. We thus discard all the structural limitations that were previously imposed on networks. The method is defined by a two-step process illustrated in Fig.~\ref{fig spectral red}. First, the nodes are classified into $n$ groups of nodes that share similar connectivity properties (these could correspond to the modules, or to subgroups within the modules, in the case of modular networks). The variables of the reduced system, the \emph{observables}, are weighted averages of the node activities within each group. This means that the number of groups in the node partition, $n$, corresponds to the dimension of the reduced system. We refer to the vectors that specify these group averages as the \emph{reduction vectors}. Generically, the observables' dynamics cannot be expressed as a closed system of ODEs. The second step consists of approximately closing it. For this, we approximate the dynamics' functions and find conditions on the reduction vectors that make this approximation as accurate as possible. These conditions, that we dub the \emph{compatibility equations}, determine the reduction vectors and the parameters of the reduced system, which define a \emph{reduced adjacency matrix}. The result is a \emph{reduced system} of dimension $n$ on a \emph{reduced network}.  

The paper is structured as follows. First, we present the dimension reduction strategy together with a method for approximately solving the compatibility equations. Then, we test the reductions in different systems, for various types of dynamics and synthetic network topologies. 
We finally apply our method to systems inspired in three types of real-world networks. 
All the theoretical results used to derive the equations presented in the paper are detailed in the Supplementary Information (SI).

\section{Dimension reduction strategy}\label{sec1}

\subsection{Overview}
In many real-world networks, the nodes can be partitioned into groups of densely connected nodes (commonly referred to as \emph{modules})~\cite{hartwell_molecular_1999, newman_modularity_2006, ravasz_hierarchical_2002, sporns_modular_2016}. Usually, a module corresponds to a group of nodes that are also involved in a common task, which suggests that the structural modularity seen in these networks reflects a modularity at the functional level~\cite{ravasz_hierarchical_2002, sporns_modular_2016}. Other networks have groups of sparsely connected nodes whose connectivity profiles with nodes in other groups are similar. For example, in plant-pollinator networks, pollinators interact with plants and plants with pollinators, but it is assumed that plants do not directly interact with other plant species and analogously for pollinators (i.e. the interaction network is bipartite). We can thus say that the set of plants and the set of pollinators constitute two different groups in terms of their connectivity profiles. Either if the structure of these networks tends to be assortative (nodes of similar connectivity properties tend to be connected) or dissortative (nodes of similar connectivity properties tend to be disconnected), they have in common the fact that their nodes can be classified into groups of nodes that share similar connectivity properties. Throughout this paper, we use the terms \emph{modularity} and \emph{community structure} to refer to this network property, regardless of whether the groups are densely or sparsely connected, and \emph{modules} or \emph{communities} to refer to these groups.

We present a strategy to reduce the dimension of a dynamical system taking into account the community structure of the interaction network. 
We assume that the $N$ nodes of the original system interact by means of a 
not necessarily symmetric adjacency matrix $\bm{W} = (w_{ij})_{i,j}\in\mathbb{R}^{N\times N}$ and that their activities evolve in time according to a dynamics of the form
\begin{equation}
  \dot{x}_i = f(x_i) + \sum \limits_{j=1}^N w_{ij} \, g(x_i, x_j)\ ,
  \label{system}
\end{equation}
with $i \in \{1, \cdots, N\}$, and where $x_i$ is a real-valued function of a real variable (time) that represents the activity of node $i$. The functions $f$ and $g$ are generic functions of class $\mathscr{C}^1$ (i.e., 1-time differentiable with continuous derivatives). The function $f$ defines the self-dynamics of the nodes, while $g$ accounts for the dynamical coupling between pairs of nodes. The weight $w_{ij}\in \mathbb{R}$ encodes the strength of the interaction from node $j$ to node $i$.

This model assumes that node heterogeneity comes from the adjacency matrix itself (the functions $f$ and $g$ are the same for all the nodes). It is therefore reasonable to assume that nodes with a similar connectivity profile will have similar activities. We aim to construct a reduced version of the system by defining a set of \emph{linear observables}, each of them representing a weighted average of the node activities within each of the connectivity-based communities. Thus, our reduced system has the same dimension as the number of modules in the original network. 

Let us suppose that we know the community structure in our network. This means that we have a \emph{partition} of the nodes into groups: each node belongs to one and only one of these groups. We define $n$ as the number of groups and denote the groups by $G_1, \cdots, G_n$ and their corresponding observables by $\cal{X}_1, \cdots, \cal{X}_n$. For all $\nu \in \{1,\cdots,n\}$, the observable $\cal{X}_\nu$ is a linear combination of the activities of nodes in group $G_\nu$: $\cal{X}_\nu$ is defined by a non-negative, normalized weight vector $\bm{a}_\nu = (a_{\nu i})_{i=1}^N \in \mathbb{R}^N$:
\begin{equation}
\begin{array}{lllll}
  \cal{X}_\nu := \displaystyle \sum \limits_{i=1}^N a_{\nu i} x_i, 
  && \displaystyle \sum \limits_{i=1}^N a_{\nu i} = 1,
  && a_{\nu i} = 0 \text{ if } i \notin G_\nu. \\
\end{array}
\label{observables}
\end{equation}
We call $\bm{a}_1, \cdots, \bm{a}_n$ the \emph{reduction vectors}. 

To reduce the dimension of our system we need to specify how to map the original dynamics [Eq.~\eqref{system}] into a reduced dynamics for the $n$ observables. This implies (a) specifying what the vectors $\bm{a}_1, \cdots \bm{a}_n$ are, and (b) providing a system of ODEs for the temporal evolution of the observables. We provide the details in the following sections. A rigorous derivation of all the results is provided in the SI.

\subsection{Group adjacency and in-degree matrices}
Let us denote by $m_1, \cdots, m_n$ the sizes of the groups in our network. Without loss of generality we can suppose that the nodes have been reordered so that the indices of nodes in each group are consecutive numbers:
\begin{equation}
  G_\nu = \left\lbrace 1 + \sum \limits_{\rho=1}^{\nu-1} m_\rho, \cdots, \sum \limits_{\rho=1}^{\nu} m_\rho \right\rbrace.
\end{equation}
This allows us to express the adjacency matrix $\bm{W}$ in the block form
\begin{equation}
  \bm{W} = \left( 
    \begin{array}{ccc}
    \bm{W}_{11} & \cdots & \bm{W}_{1n} \\
    \vdots & \ddots & \vdots \\
    \bm{W}_{n1} & \cdots & \bm{W}_{nn}
  \end{array} \right),
  \label{decomposed adjacency matrix}
\end{equation}
where $\bm{W}_{\nu \rho}$ is the submatrix of size $m_\nu \times m_\rho$ that includes all the interaction weights from nodes in group $G_\rho$ to nodes in group $G_\nu$. In a similar way we can define the group-to-group weighted in-degree diagonal matrix $\bm{K}_{\nu \rho} = \text{diag} ( k_{i_1}^\rho, \cdots, k_{i_{m_\nu}}^\rho )$, where $\{i_1, \cdots, i_{m_\nu}\} = G_\nu$ and $k_i^\rho = \sum_{j \in G_\rho} w_{ij}$ is the weighted in-degree of node $i$ only taking into account connections that come from nodes in group $G_\rho$. Thus, the global diagonal in-degree matrix can be expressed as a function of the group-to-group in-degree matrices as
\begin{equation}
  \bm{K} = \left( 
  \begin{array}{ccc}
    \bm{K}_{11} + \cdots + \bm{K}_{1n} & \cdots & 0 \\
    \vdots & \ddots & \vdots \\
    0 & \cdots & \bm{K}_{n1} + \cdots + \bm{K}_{nn}
  \end{array} \right).
  \label{decomposed degree matrix}
\end{equation}

\subsection{Reduction vectors and approximate reduced dynamics}
From Eqs.~\eqref{system} and \eqref{observables}, the exact temporal evolution of a given observable $\cal{X}_\nu$ is
\begin{equation}
  \cal{ \dot{X}}_\nu 
    = \displaystyle \sum \limits_{i=1}^N a_{\nu i} f(x_i) + \sum \limits_{i,j=1}^N a_{\nu i} w_{ij} g(x_i,x_j).
  \label{Xnu dot}
\end{equation}
Our objective is to rewrite Eq.~\eqref{Xnu dot} in a closed form, that is, to make it be a function of the observables only. This cannot be fulfilled exactly in general, but we can work on Eq.~\eqref{Xnu dot} so that it admits an \emph{approximate} closed form. The general idea is the following: first, approximate $f(x_i$) and $g(x_i,x_j)$ by Taylor polynomials around the observables associated to the groups to which nodes $i$ and $j$ belong, and, second, introduce these approximations in Eq.~\eqref{Xnu dot} to find conditions on the set of reduction vectors $\{\bm{a}_\nu\}_\nu$ which ensure that the resulting equations admit an approximate closed form. The justification behind using Taylor approximations is the following: if the node partition reflects a true organization of the nodes into groups of similar connectivity profiles, then the nodes within the same group should have similar activities, and these activities should be close to the corresponding observable at any time. The validity of this assumption will of course depend on the properties of the chosen partition, especially on the number of groups and on the inter-node heterogeneity within each group.

We present two possible reductions that result from considering zeroth-order and first-order Taylor approximations of functions $f$ and $g$ (approximations of a larger order would require the use of non-linear observables~\cite[Annexe B]{Thibeault2020_master} and is beyond the scope of the present work). We call them the \emph{homogeneous} and the \emph{spectral} reductions, respectively, for reasons that will become clear below.

\paragraph{Homogeneous reduction.}
In what we dub the \emph{homogeneous reduction}, we approximate $f(x_i) \approx f(\cal{X}_\nu)$ and $g(x_i,x_j) \approx g(\cal{X}_\nu,\cal{X}_\rho)$ whenever $i \in G_\nu$ and $j \in G_\rho$. Doing so immediately transforms Eq.~\eqref{Xnu dot} into an (approximate) closed form, regardless of the particular choice of the vectors $\bm{a}_1, \cdots, \bm{a}_n$. We choose to define these vectors as being homogenous over the different groups, that is,
\begin{equation}
  a_{\nu i} = \left\lbrace 
  \begin{array}{ll}
    1/m_\nu & \text{ if } i \in G_\nu, \\
    0 & \text{ otherwise}.
  \end{array} \right.
\end{equation}
The approximate reduced dynamics becomes
\begin{equation}
  \begin{array}{lll}
    \cal{\dot{X}}_\nu 
    &\approx& \displaystyle 
    f(\cal{X}_\nu)
    + \sum \limits_{\rho=1}^n \cal{W}_{\nu \rho} \, g(\cal{X}_\nu, \cal{X}_\rho), \\
  \end{array}
  \label{Xnu dot homogeneous}
\end{equation}
where
\begin{equation}
  \cal{W}_{\nu \rho} := \sum \limits_{\substack{ i \in G_\nu \\ j \in G_\rho}} a_{\nu i} w_{ij}
    = \frac{1}{m_\nu} \sum \limits_{i \in G_\nu} k_i^\rho
\label{calW nu rho}
\end{equation}
is the weighted in-degree coming from nodes in $G_\rho$, averaged over nodes in $G_\nu$ according to $\bm{a}_\nu$ (see Proposition \ref{prop homogeneous} in section~\nameref{sec: app closing the dynamics} of the SI for details). Notice that Eq.~\eqref{Xnu dot homogeneous} is analogous in form to the original dynamics. The matrix $\bm{\cal{W}} = (\cal{W}_{\nu \rho})_{\nu, \rho}$ is the \emph{reduced adjacency matrix}: a weighted matrix of interactions among the observables in the approximate reduced system.

\paragraph{Spectral reduction.}
In what we call the \emph{spectral reduction}, we go one step further and approximate $f$ and $g$ by first-order Taylor polynomials around the appropriate observables:
\begin{subequations}
\begin{align}
     f(x_i) & \approx f(\cal{X}_\nu) + f'(\cal{X}_\nu) (x_i - \cal{X}_\nu) \\
     g(x_i,x_j) &\approx g(\cal{X}_\nu, \cal{X}_\rho) + g_1(\cal{X}_\nu, \cal{X}_\rho) (x_i - \cal{X}_\nu) \nonumber \\
     & \qquad + g_2(\cal{X}_\nu, \cal{X}_\rho) (x_j - \cal{X}_\rho) 
\end{align}
\end{subequations}
where $i \in G_\nu, j \in G_\rho$, and $g_1, g_2$ denote the partial derivatives of $g$ with respect to its first and second arguments, respectively. Substituting these expressions into Eq.~\eqref{Xnu dot} does not, however, yield a closed dynamics. As stated in Proposition \ref{prop spectral} (see section~\nameref{sec: app closing the dynamics} of the SI), the reduction vectors must fulfill the following conditions to close the system:
\begin{subequations}
\begin{align}
    \bm{K}_{\nu \rho} \bm{\widehat{a}}_\nu & = \mu_{\nu \rho} \bm{\widehat{a}}_\nu \label{comp eq K} \\
    \bm{W}_{\nu \rho}^T \bm{\widehat{a}}_\nu & = \lambda_{\nu \rho} \bm{\widehat{a}}_\rho \ , \label{comp eq W}
\end{align}
\end{subequations}
for $\nu, \rho \in \{1, \cdots, n\}$.
The vector $\bm{\widehat{a}}_\nu$ is defined by the components of $\bm{a}_\nu$ that correspond to nodes within group $G_\nu$ (the other elements are 0 by definition):
\begin{equation}
  \bm{\widehat{a}}_\nu = ( \widehat{a}_{\nu i} )_{i=1}^{m_\nu} \in \mathbb{R}^{m_\nu}, \hspace{0,4cm}
  \widehat{a}_{\nu i} := a_{\nu \, p_\nu(i)}, \hspace{0,4cm}
  p_\nu(i) := i + \sum \limits_{s=1}^{\nu-1} m_s,
  \label{def a hat}
\end{equation}
assuming, again, that nodes have been reordered according to their group membership. We call $\bm{\widehat{a}}_1, \cdots, \bm{\widehat{a}}_n$ the \emph{partial reduction vectors}. The matrices $\bm{\mu} = (\mu_{\nu \rho})_{\nu,\rho}$ and $\bm{\lambda} = (\lambda_{\nu \rho})_{\nu,\rho}$, both of dimension $n \times n$, are sets of parameters to be determined.  We note that Eqs.~\eqref{comp eq K} and \eqref{comp eq W} need to be imposed whenever the $g$ function varies with its first and its second arguments, respectively. This means, for example, that reducing a system in which $g(x,y)=g(y)$ does not require Eq.~\eqref{comp eq K} to be fulfilled.

\begin{figure}[ht]
  \centering
  \includegraphics[width=1\linewidth]{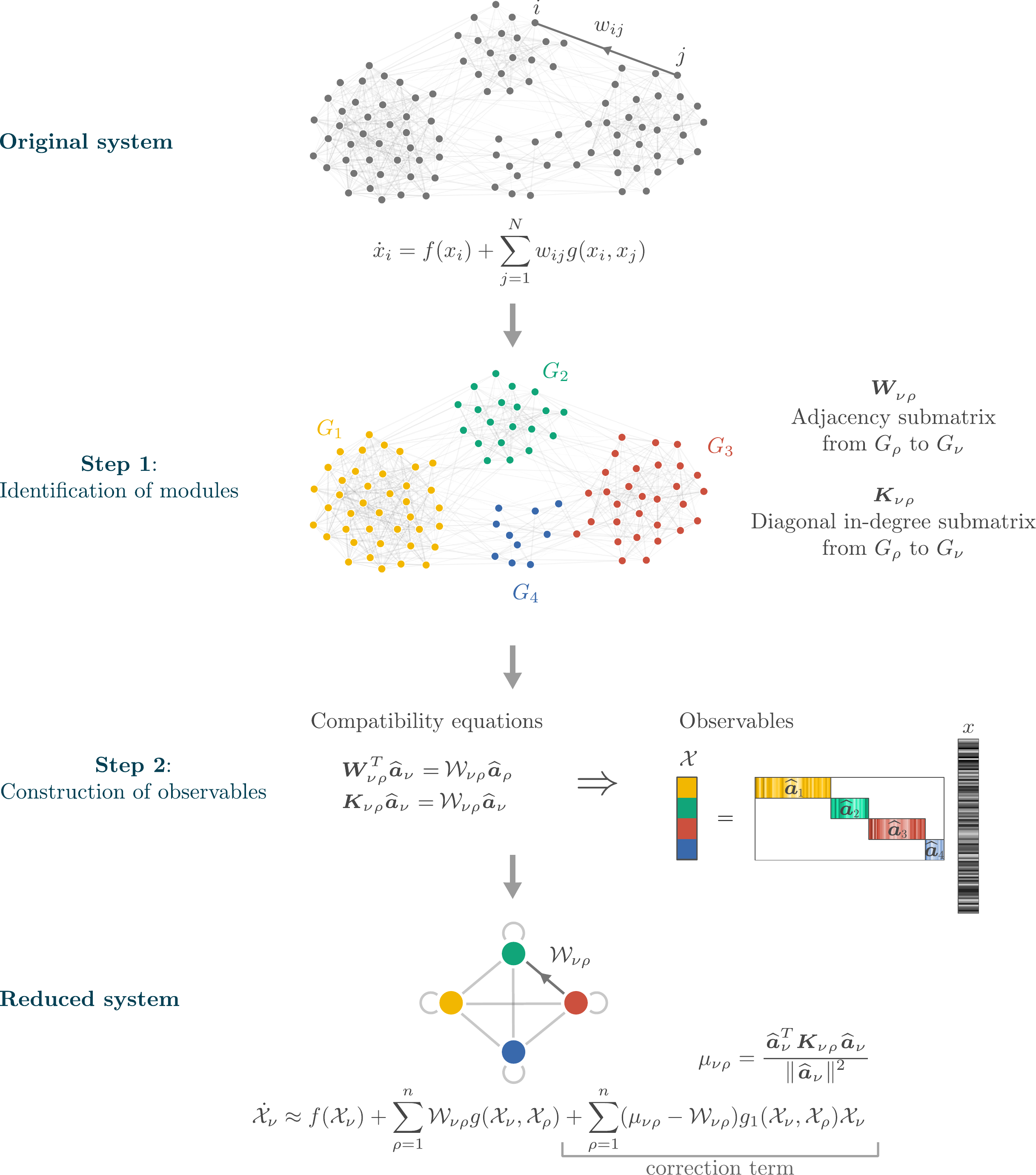}
  \caption{\small Method schematics of the spectral reduction. First, the nodes are partitioned into assortative or disassortative groups so that nodes in the same group share similar connectivity properties. In this case, $n=4$ groups were defined. Group-to-group adjacency and in-degree submatrices are defined from the original adjacency matrix. Second, these matrices are used to solve the compatibility equations on the partial reduction vectors $\bm{\widehat{a}}_1, \cdots, \bm{\widehat{a}}_n$ and the  matrix $\bm{{\cal W}} = ({\cal W}_{\nu \rho} )_{\nu,\rho=1}^n$. Once the compatibility equations are solved, the observables are constructed as the scalar product of the reduction vectors and the activity vector $\bm{x}$. The final approximate reduced dynamics on the observables is analogous to the original dynamics---with  $\bm{{\cal W}}$ acting as a reduced adjacency matrix---and can also incorporate a correction term.
  }
  \label{fig spectral red}
\end{figure}

As in Ref.~\cite{thibeault_threefold_2020}, we refer to conditions~\eqref{comp eq K} and \eqref{comp eq W} as the \emph{compatibility equations}. They relate the reduction vectors with the adjacency and weighted in-degree matrices, independently of the functions that define the node dynamics. It is therefore solely the structure of interactions in the network that shapes and constrains the construction of the observables in our approach.
 
As detailed in the proof of Proposition \ref{prop spectral} of the SI (section~\nameref{sec: app closing the dynamics}), once the compatibility equations are fulfilled, the parameters $\lambda_{\nu \rho}$ and $\mu_{\nu \rho}$ coincide with the group-to-group averaged in-degree defined by Eq.~\eqref{calW nu rho},
\begin{subequations}
\begin{equation}
  \mu_{\nu \rho} = \cal{W}_{\nu \rho}
  \label{mu nu rho}
\end{equation}
\begin{equation}
  \lambda_{\nu \rho} = \cal{W}_{\nu \rho},
  \label{lambda nu rho}
\end{equation}
\end{subequations}
and the approximate temporal evolution of the $\nu$-th observable is given by
\begin{equation}
  \cal{\dot{X}}_\nu 
  \approx \displaystyle f(\cal{X}_\nu)
  + \sum \limits_{\rho = 1}^n \cal{W}_{\nu \rho} \, g( \cal{X}_\nu, \cal{X}_\rho ).
  \label{approx reduced dynamics}
\end{equation}
As in the homogeneous reduction, this dynamics preserves the form of the original dynamics and includes a $n \times n$ \emph{reduced adjacency matrix} $\cal{\bm{W}} = (\cal{W}_{\nu \rho})_{\nu,\rho}$ between the different observables, defined by Eq.~\eqref{calW nu rho}.

Thus, in the spectral reduction, mapping the original dynamics into the reduced dynamics requires (a) solving the compatibility equations to find the reduction vectors $\bm{a}_1, \cdots, \bm{a}_n$, and (b) computing the reduced adjacency matrix $(\cal{W}_{\nu \rho})_{\nu, \rho}$ from the reduction vectors. The second step is straightforward but not the first one. Solving the compatibility equations can be problematic because, except in very particular cases, these equations cannot be fulfilled simultaneously. In the next section we propose a method for finding an approximate solution, which takes into account the spectral properties of the group-to-group weighted in-degree and adjacency matrices. This is why we call it the \emph{spectral reduction}. Fig.~\ref{fig spectral red} shows a schematics of the spectral reduction process.

\subsection{Solving the compatibility equations}
The spectral reduction requires solving the compatibility equations~\eqref{comp eq K}~and~\eqref{comp eq W} to determine the reduction vectors. Equation~\eqref{comp eq K} is an eigenvalue-eigenvector equation for the partial reduction vector $\bm{\widehat{a}}_\nu$. Equation~\eqref{comp eq W} is more involved because it includes a crossed dependency between different partial reduction vectors.
Corollary~\ref{corollary 1} in the SI (section~\nameref{sec: app equivalence comp eqs}) provides a strategy to solve Eqs.~\eqref{comp eq K}~and~\eqref{comp eq W} when the original adjacency matrix is positive that can be sumarized as follows.
If we assume the partial reduction vectors $ \bm{\widehat{a}}_1, \cdots, \bm{\widehat{a}}_n$ to be strictly positive, it is possible to find an equivalent form for Eq.~\eqref{comp eq W} without crossed dependencies. The resulting set of \emph{decoupled} compatibility equations reads
\begin{subequations}
\begin{align}
    \bm{K}_{\nu \rho} \bm{\widehat{a}}_\nu = \mu_{\nu \rho} \bm{\widehat{a}}_\nu
    \label{comp eq K decoupled} \\
    \bm{W'}_{\nu \rho} \bm{\widehat{a}}_\nu = \lambda_{\nu \rho}' \bm{\widehat{a}}_\nu
    \label{comp eq W decoupled}
\end{align}
\end{subequations}
for $\nu, \rho \in \{1,\cdots,n\}$, where
\begin{subequations}
\begin{align}
  \lambda_{\nu \rho}' & :=
    \begin{cases}
      \lambda_{\nu \nu} & \text{ if } \nu = \rho, \\
      \lambda_{\nu \rho} \lambda_{\rho \nu}\phantom{m} & \text{ if } \nu \neq \rho,
    \end{cases}\\
  \bm{W'}_{\nu \rho} & :=
    \begin{cases}
      \bm{W}_{\nu \nu}^T & \text{ if } \nu = \rho, \\
      \bm{W}_{\rho \nu}^T \bm{W}_{\nu \rho}^T & \text{ if } \nu \neq \rho.
    \end{cases}
  \end{align}
\end{subequations}
The decoupled form~\eqref{comp eq W decoupled} for fixed $\nu$ and variable $\rho$ is easier to treat because it consists of a set of $n$ equations on $\bm{\widehat{a}}_\nu$ and $\{\lambda_{\nu \rho}'\}_\rho$ that we can try to solve independently for each $\nu$. These equations state that $\bm{\widehat{a}}_\nu$ has to be, simultaneously, the dominant eigenvector of a collection of $n$ matrices (the dominant condition being a consequence of the Perron-Frobenius Theorem). Equation~(\ref{comp eq K decoupled}) states that $\bm{\widehat{a}}_\nu$ should also be an eigenvector of the $n$ diagonal matrices $\bm{K}_{\nu 1}, \cdots, \bm{K}_{\nu n}$.

Let us first address the problem of solving Eq.~\eqref{comp eq W decoupled} for a fixed $\nu$ and with $\rho$ ranging in $\{1, \cdots, n\}$. In general, the equations in this set are not simultaneously solvable because, except in very particular cases, the matrices involved do not share the dominant eigenspace.
To find an approximate solution, we first assume that the scalars $\{ \lambda_{\nu \rho}' \}_\rho$ are the dominant eigenvalues of the matrices involved (this would be the case if the equations could be solved exactly). If we relax the requirement of $\bm{\widehat{a}}_\nu$ having positive entries, our goal is to find a vector $\bm{\widehat{a}}_\nu$ such that $\sum \limits_{i=1}^{m_\nu} \widehat{a}_{\nu i} = 1$ and which minimizes the sum of the corresponding quadratic errors,
\begin{equation}
  E( \bm{\widehat{a}}_\nu ) := \Vert \bm{W'}_{\nu 1} \bm{\widehat{a}}_\nu - \lambda'_{\nu 1} \bm{\widehat{a}}_\nu \Vert^2 + \cdots +  \Vert \bm{W'}_{\nu n} \bm{\widehat{a}}_\nu - \lambda'_{\nu n} \bm{\widehat{a}}_\nu \Vert^2.
\end{equation}

The solution to this problem is presented in Proposition~\ref{prop 3} (SI, section~\nameref{sec: app solution comp eqs W}). To find a solution vector lying in the subspace spanned by a collection of $r$ vectors $\bm{u}_1, \cdots, \bm{u}_r \in \mathbb{R}^{m_\nu}$ with $\sum_{i=1}^{m_\nu} [\bm{u}_s]_i = 1$ for all $s$, 
one has to determine $\bm{y} = (x_1, \cdots, x_r, K)$ that solves the linear equation
\begin{align}
  \bm{\hat{C} y} =  ( 0, \cdots, 0, 1 )^T,
  \label{linear system alpha}
\end{align}
where
$\bm{\hat{C}} := \left(
\begin{array}{c | c} 
\bm{C} & - \bm{1} \\
\hline
\bm{1}^T & 0
\end{array}
\right),$
$\bm{1} = (1, \cdots, 1)^T$ and $\bm{C} = (c_{st})_{s,t}$ is the $r \times r$ matrix given by
\begin{equation}
  c_{st} := \sum \limits_{\rho=1}^n \langle \bm{W'}_{\nu \rho} \bm{u}_s - \lambda'_{\nu \rho} \bm{u}_s,  \bm{W'}_{\nu \rho} \bm{u}_t - \lambda'_{\nu \rho} \bm{u}_t \rangle.
\end{equation}
The solution is then
\begin{equation}
  \bm{\widehat{a}}_\nu = x_1 \bm{u}_1 + \cdots + x_r \bm{u}_r.
\end{equation}
This equation might have different solutions, but the error associated to all of them is the same (see related discussion after Proposition~\ref{prop 3} in the SI for details).

The above procedure allows us to find a solution that is restricted to the subspace spanned by $\bm{u}_1, \cdots, \bm{u}_r \in \mathbb{R}^{m_\nu}$. A solution that is not restricted to a particular subspace is obtained when $r=m_\nu$ and $\bm{u}_1, \cdots, \bm{u}_r$ is the canonical basis of $\mathbb{R}^{m_\nu}$. This is the \emph{optimal} solution, i.e., the one with the smallest error. We have observed that the optimal solution is extremely close to the subspace spanned by the dominant eigenvectors of matrices $\bm{W'}_{\nu 1}, \cdots, \bm{W'}_{\nu n}$. This suggests that a very good approximation to the optimal solution is obtained when $r=n$ and $\bm{u}_1, \cdots, \bm{u}_r$ are the dominant eigenvectors of these matrices (see Fig.~\ref{fig optimal vs restricted solution} and section~\nameref{sec: app solution comp eqs W} of the SI for details). Finding the solution in the subspace spanned by the dominant eigenvectors has clear computational advantages when $n \ll m_\nu$.
In the results shown here we restrict the solution to Eq.~\eqref{linear system alpha} to the subspace spanned by the dominant eigenvectors. We have observed that, despite not explicitly requiring $\bm{\widehat{a}}_\nu$ to be a positive vector, this is so in all the cases studied.

We have addressed the problem of solving Eq.~\eqref{comp eq W} but not Eq.~\eqref{comp eq K}.  
Assuming again that $\bm{W}$ is a positive matrix and that we want the reduction vectors to be positive, Eqs.~\eqref{comp eq K}~and~\eqref{comp eq W} are equivalent to Eqs.~\eqref{comp eq K decoupled}~and~\eqref{comp eq W decoupled}; for a fixed $\nu$, these are eigenvector-eigenvalue equations for the vector $\bm{\widehat{a}}_\nu$. Thus, it could be tempting to try to find an approximate solution following the strategy proposed above just by adding Eqs.~\eqref{comp eq K decoupled} for all $\rho$ to the list of eigenvector-eigenvalue equations that involve $\bm{\widehat{a}}_\nu$. However, contrary to what happens with Eqs.~\eqref{comp eq W decoupled}, we do not have a criterion to choose the corresponding scalars $\mu_{\nu \rho}$. In principle, any of the diagonal entries of matrix $\bm{K}_{\nu \rho}$ could be chosen as $\mu_{\nu \rho}$, but the resulting error and solution vector $\bm{\widehat{a}}_\nu$ could be very different depending on this choice.

To solve this issue we apply the following procedure: for a fixed $\nu$, we first approximately solve Eqs.~\eqref{comp eq W decoupled} to determine $\bm{\widehat{a}}_\nu$. Once this vector is specified, for every $\rho$ we compute the scalar $\mu_{\nu \rho}$ that minimizes the quadratic error associated to Eq.~\eqref{comp eq K decoupled}, which is given by
\begin{equation}
  \mu_{\nu \rho} 
  = \frac{ \bm{\widehat{a}}_\nu^T \bm{K}_{\nu \rho} \bm{\widehat{a}}_\nu}
  { \| \bm{\widehat{a}}_\nu \|^2}
  \label{mu nu rho minimal error}
\end{equation}
(see Lemma \ref{lemma min quadratic error} in the SI). In doing so, we no longer assume that Eq.~\eqref{mu nu rho} holds, and this leads to a reduced dynamics that incorporates a correction factor which depends on matrix $\bm{\mu} = (\mu_{\nu \rho})_{\nu,\rho}$:
\begin{multline}
\cal{\dot{X}}_\nu 
\approx \displaystyle  f(\cal{X}_\nu) 
+ \sum \limits_{\rho = 1}^n \cal{W}_{\nu \rho} \, g(\cal{X}_\nu, \cal{X}_{\rho})  \\ \displaystyle
+ \sum \limits_{\rho = 1}^n \left( \mu_{\nu \rho} - \cal{W}_{\nu \rho} \right) g_1(\cal{X}_\nu, \cal{X}_{\rho}) \cal{X}_\nu
\label{approx dynamics + corr mu}
\end{multline}
(see section~\nameref{sec: app corrected reduced system} of the SI for details).

Notice that the reduced dynamics described by Eq.~\eqref{approx dynamics + corr mu} reduces to the one presented earlier in Eq.~\eqref{approx reduced dynamics} when the compatibility equations that involve the weighted in-degree matrix are solved exactly or when the $g$ function does not depend on its first argument. Also, when the node partition is such that the connectivity properties of nodes in the same group are very similar, we can expect each matrix $\bm{K}_{\nu \rho}$ to approximately be a multiple of the identity matrix, and in this case conditions~\eqref{comp eq K decoupled}~and~\eqref{mu nu rho} are automatically fulfilled for any $\bm{\widehat{a}}_\nu$. This suggests that the reduced dynamics~\eqref{approx reduced dynamics} is quite accurate when the groups are composed of nodes of similar connectivity properties, which is one of the main assumptions behind our reduction methods.

\section{Exploring the homogeneous and spectral reductions}
So far, we have presented two methods for reducing a given $N$-dimensional dynamical system on a network into an $n$-dimensional one whose variables, the observables, represent weighted averages of the node activities of the original network. We assumed that the nodes in the original network are organized into $n$ groups of similar connectivity properties. In the reductions, the observables are constructed so that each of them represents the activity within each of these groups.

The accuracy of the reduction is therefore expected to strongly depend on the number of groups $n$ and on the precise arrangement of the nodes into the different groups. Defining the groups and anticipating what $n$ should be to get a proper reduction is, however, a difficult task and there is no clear method to that end. A large repertoire of community detection algorithms on networks has been developed in the last years, which include spectral-based methods, algorithms that use information theory analysis, and Bayesian inference methods that fit the input network to modular graph models~\cite{Doreian2020, Fortunato2016, peixoto2021descriptive}, to cite only some examples. The issue of detecting such groups, despite being necessary for our reductions to be accurate, is not central to the present work. When studying the dimension-reduction methods that we have described earlier we will assume that the networks have been previously analyzed and the main communities have been already detected. We will nonetheless propose ways to refine a given node partition to obtain a richer and more accurate reduced system when the node heterogeneity within the given groups is too large. This will be a fruitful strategy when dealing with networks whose connectivity is highly heterogeneous.

We expect a proper reduction to be able to capture some salient features of the original dynamical system. In systems that are at equilibrium, such a feature can be the system's response to perturbations of some structural or dynamical parameters. The detection of bifurcation points, for example, can be crucial in anticipating global shifts in the system's behavior under such perturbations. We would like the reduced dynamics to capture these critical parameters even if the precise activity at equilibrium is not necessarily obtained with high accuracy.

In what follows, we study the ability of the two proposed reduction strategies to predict bifurcation diagrams of different dynamical systems as we vary the overall strength of all the interactions in the network. As our method for solving the compatibility equations requires the adjacency matrix to be positive, in this work we restrict ourselves to positive matrices. We can easily transform a non-negative weighted or binary adjacency matrix into a positive one by simply assuming that the missing interactions are arbitrarily weak.

\subsection{Examples of node dynamics}
The proposed methods for dimension reduction can be applied to a node dynamics defined by Eq.~\eqref{system} for arbitrary functions $f$ and $g$ of class $\mathscr{C}^1$. In this paper we have chosen $f$ and $g$ so as to model three types of dynamics on networks: a form of neuronal dynamics, a model of infectious disease spreading and an ecological dynamics.

\paragraph{Neuronal dynamics.}
We take as an example of neuronal dynamics Hopfield's continuous model~\cite{hopfield_neurons_1984}. Each node in the network represents a neuron that receives and projects inputs to the other neurons via synaptic connections. The node activity $x_i$ represents the mean membrane potential 
of neuron $i$ and evolves according to
\begin{equation}
  \dot{x}_i = -x_i + \sum \limits_{j=1}^N w_{ij} \, g(x_j),
  \label{system WC}
\end{equation}
where $g$ is a sigmoid function of one variable only which transforms the 
potential $x_j$ of the presynaptic neuron $j$ into its output $g(x_j)$ (for example, its firing rate). Thus, in this case we have $g(x,y) = g(y)$. We specifically take the function $g$ to be
\begin{equation}
  g(y) = \frac{1}{1 + \exp(-\tau ( y - \mu) )},
\end{equation}
where $\tau$ and $\mu$ are two parameters that control the maximal slope of $g$ and its location. Hopfield's continuous model is closely related to several well-known models of neuronal activity on networks, such as the Wilson-Cowan and Grossberg models \cite[Sec.~6.C]{grossberg1988nonlinear} or the firing-rate model \cite[p.~360]{vogels2005neural}.

\paragraph{Infectious dynamics.}
The SIS (susceptible-infected-susceptible) model aims at describing the spread of a disease in a network of contacts. Each node can be in two possible states: susceptible or infected. The node state stochastically evolves in time according to the states of the nodes it is in contact with: a susceptible node becomes infected at a rate $\lambda$ times the number of infected contacts and an infected node becomes susceptible again at a constant rate of 1. It is possible to define a mean-field version of the model that specifies the temporal evolution of the probability $x_i$ of node $i$ being infected in the contact network~\cite[Sec.~V.A.2]{pastor2015epidemic}:
\begin{equation}
  \dot{x}_i = -x_i + \gamma (1-x_i) \sum \limits_{j=1}^N w_{ij} \, x_j,
  \label{system SIS}
\end{equation}
where $\gamma \geq 0$ is the normalized infection rate.

\paragraph{Ecological dynamics.}
We consider a network of interacting species in a given ecosystem. If $x_i$ represents the abundance of species $i$, the evolution of the species' abundances can be modeled by

\begin{equation}
  \dot{x}_i = B + x_i \left( 1-\frac{x_i}{K} \right) \left( \frac{x_i}{C} -1 \right) 
  + \sum \limits_{j=1}^N w_{ij} \frac{x_i x_j}{D + E x_i + H x_j},
  \label{system ecology}
\end{equation}
where $B$ is a constant migration rate, $K>0$ is a carrying capacity and $C>0$ is the minimum abundance of species $i$ for it to grow~\cite{holland_population_2002}. The parameters $D, E, H$ shape the inter-species coupling dynamics.
We assume that the adjacency matrix is positive, so the dynamics is that of a mutualistic network.

\subsection{Exact versus reduced bifurcation diagrams for networks with block structure}

We first assess the extent to which the homogeneous and spectral reduction methods are able to reflect the system's sensitivity to parameter changes in random networks with known block structure. For this, we homogeneously vary the magnitude of all the interactions so as to force the system to transition between bistable regimes and regimes characterized by a single equilibrium point. By modifying the interaction strengths back and forth and integrating the system towards equilibrium, we can capture a bifurcation diagram that reflects these transitions (see section~\nameref{sec: app bif diagrams} of the SI for further details). 

To compare the reduced and the exact bifurcation diagrams, we plot a weighted average of the $n$ observables at equilibrium for the exact and reduced systems, $\langle \cal{X} \rangle$, as a function of the average weighted in-degree of the reduced system, $\langle \cal{K} \rangle$. The magnitude $\langle \cal{X} \rangle$ is the observables' average weighted by group size and reflects the overall state of the system:
\begin{equation}
\langle \cal{X} \rangle := \frac{1}{N} \sum \limits_{\nu = 1}^n m_\nu \, \cal{X}_\nu.
\label{av observable}
\end{equation}
The parameter $\langle \cal{K} \rangle$ is defined by
\begin{equation}
\langle \cal{K} \rangle := \frac{1}{N} \sum \limits_{\nu = 1}^n m_\nu \, \cal{K}_\nu,
\hspace{1cm}
\cal{K}_\nu := \sum \limits_{\rho = 1}^n \cal{W}_{\nu \rho}.
\label{av degree}
\end{equation}

\paragraph{Homogeneous networks.}

We start by considering random networks constructed according to the directed version of the stochastic block model (SBM)~\cite{Holland1983}, in which nodes are arranged into $n$ modules and binary connections appear independently with probabilities that depend on the node membership. By varying these probabilities we can create a full range of network structures, from assortative ones, in which nodes are densely connected to nodes in their same module, to dissortative networks, in which interactions within nodes in the same module are rare.

Figure~\ref{fig hom wc} compares the two reductions when we take the whole network as a single group ($n=1$) and when the node partition is that of the true communities in the network. When the number of communities is larger than 1, neither the homogeneous nor the spectral reductions are able to reproduce the correct bifurcation diagram with $n=1$ but both of them yield very accurate results when the true communities are provided and $n$ is increased accordingly. The two methods exhibit a very similar performance in this case. The reason is that the SBM (in the dense regime and when the network's size is large) tends to generate quite homogeneous networks, in which there is small variability in terms of connectivity among the nodes that are in the same group (see Fig.~\ref{fig degrees}A). In general, this makes the homogeneous reduction enough for predicting the bifurcation diagram. This is not a universal principle, though: there are situations in which correctly identifying the communities might not guarantee an accurate prediction of the bifurcation points by the reduced dynamics. We will come back to this issue later.

\begin{figure*}[ht]
  \centering
  \includegraphics[width=\linewidth]{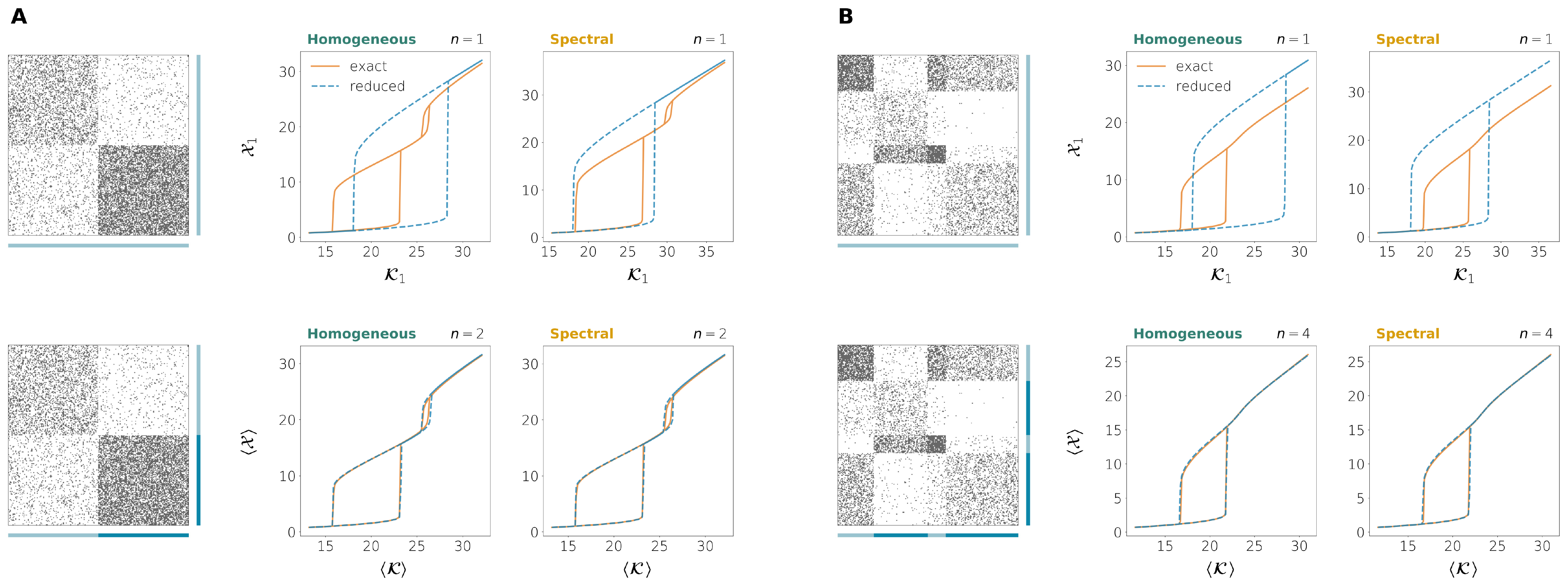}
  \caption{\small 
    Exact versus reduced bifurcation diagrams for homogeneous directed networks generated from the SBM and neuronal dynamics with $\tau = 0.3$, $\mu = 10$. The left panels show the adjacency matrices and the blue bars indicate the node partition used for the reductions. When the dimension of the reduction is $n>1$, the bifurcation diagrams show the value of the average observable at equilibrium defined by Eq.~\eqref{av observable}. The parameter on the x-axis is the average weighted in-degree of the reduced system defined by Eq.~\eqref{av degree}.
    \textbf{A}. Network on $N=200$ nodes and 2 communities of the same size in which the mean connection densities are $p_{11} = 0.3$, $p_{12} = 0.05$, $p_{21} = 0.1$, $p_{22} = 0.6$.
    \textbf{B}. Network on $N=200$ nodes and 4 communities with relative sizes 0.2, 0.3, 0.1, 0.4 and mean connection densities
      $p_{11} = 0.75$, $p_{12} = 0.05$, $p_{13} = 0.6$, $p_{14} = 0.3$,  
      $p_{21} = 0.1$, $p_{22} = 0.2$, $p_{23} = 0.03$, $p_{24} = 0.002$,  
      $p_{31} = 0.01$, $p_{32} = 0.5$, $p_{33} = 0.9$, $p_{34} = 0.05$,  
      $p_{41} = 0.35$, $p_{42} = 0.03$, $p_{43} = 0.1$, $p_{44} = 0.25$.
  }
  \label{fig hom wc}
\end{figure*}

\paragraph{Heterogeneous networks.}

We now analyze the performance of the two methods when the network has a known modular structure and there is also a large heterogeneity in the connectivity properties of nodes that are in the same module. Inspired by the Chung-Lu model~\cite{Chung2002a, Chung2002pnas}, we created a modified version of the directed SBM that can incorporate an important variability of in- and out-degrees within nodes belonging to the same community (see section~\nameref{sec: app heterogeneous networks} of the SI as well as Fig.~\ref{fig degrees}B). In this case, the homogeneous and the spectral methods show different performances, even when the true community structure is used to define the groups in the reductions. The spectral reduction tends to provide more accurate bifurcation diagrams. Yet, when the dimension $n$ is lower or equal to the number of communities, even the spectral method might yield results that are not quantitatively accurate for some dynamics. This is illustrated by the two upper rows of Fig.~\ref{fig het wc}, that show the performance of our reduction methods on heterogeneous networks with the same dynamics, the same number of communities and the same mean connection densities as in the homogeneous networks of Fig.~\ref{fig hom wc}.

This lack of accuracy is caused by the large heterogeneity among the nodes in the same group. In a way, the number of \emph{effective} groups in these networks is larger than the number of communities used to construct the network. Therefore, more refined partitions should be defined in order for the reduced dynamics to accurately predict the bifurcation diagrams.

To refine a given partition, we used a procedure whose goal is to divide the groups into subgroups so that the variability within nodes that are in the same subgroup is reduced. This variability could correspond to different connectivity attributes of nodes. As an example, we focused on the weighted in/out-degrees from/to the other groups (see section~\nameref{sec: app part refinement} of the SI). This allows us to create nested partitions from an original partition while progressively increasing the number of groups $n$ and, with it, the dimension of the reduced dynamics.

As shown in the lower rows of Fig.~\ref{fig het wc}, the partition refinements improve the quality of the reductions, especially for the spectral method. We can measure the reduction's quality by the root-mean-square-error (RMSE) between the true bifurcation diagram and the one obtained from simulating the reduced dynamics (Fig.~\ref{fig het dyn errors wc}A). Figure~\ref{fig het dyn errors wc}B~and~\ref{fig het dyn errors wc}C shows the RMSE as we increase the reduced dimension $n$.

Our results suggest that the spectral reduction is able to cope better with heterogeneities in the adjacency matrix. The comparison between these networks and their homogeneous counterparts of Fig.~\ref{fig hom wc} indicates that the effective dimension of the heterogeneous networks is larger, although it is still smaller compared to that of the original dynamics ($N=200$). We observe a similar trend in other network examples provided with different node dynamics (Fig.~\ref{fig het SIS}).

\begin{figure}[!t]
  \centering
  \includegraphics[width=\linewidth]{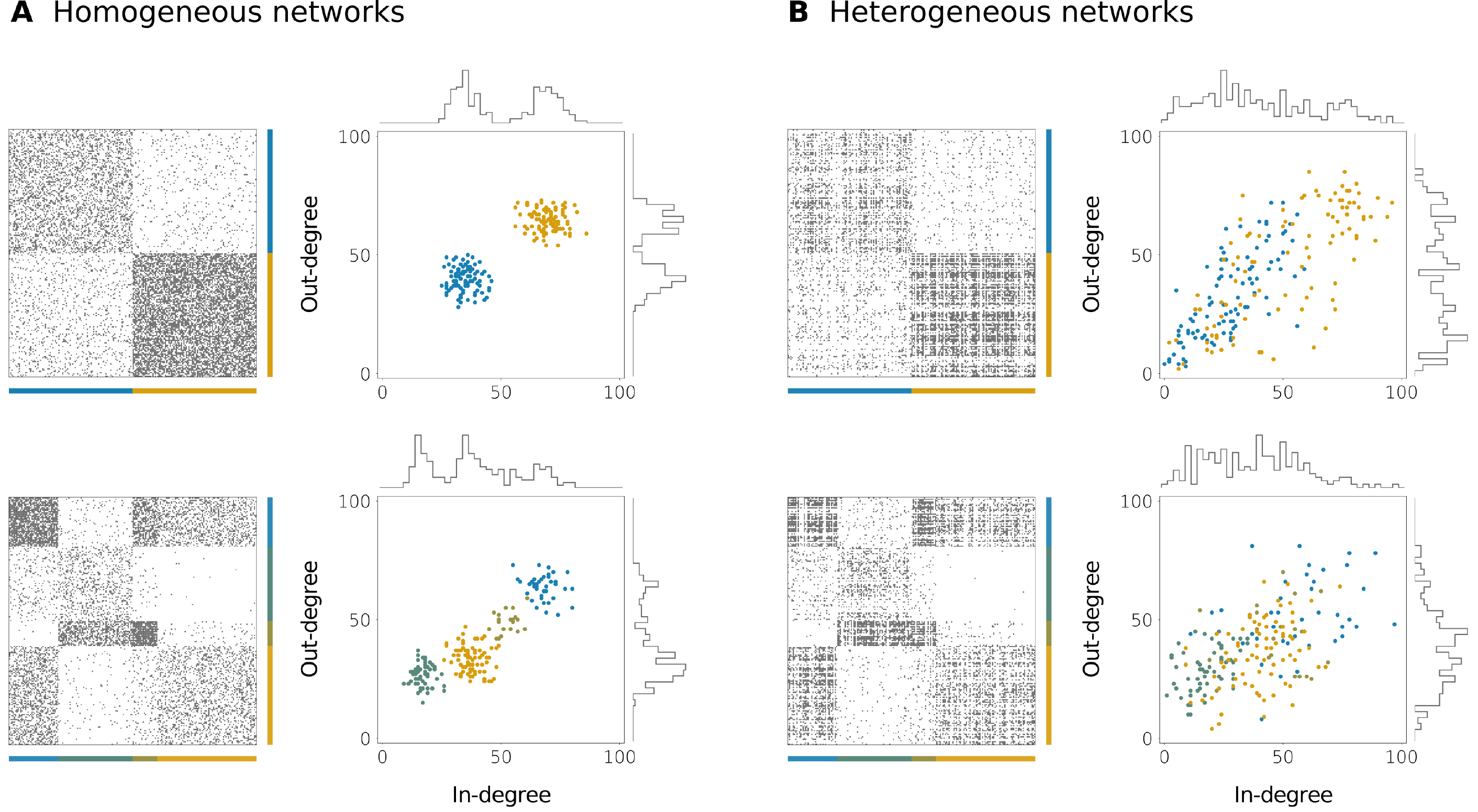}
  \caption{\small Adjacency matrices and in/out-degrees for homogeneous (\textbf{A}) and heterogeneous (\textbf{B}) directed networks. Each color corresponds to one community. The networks are analogous to those of Figs.~\ref{fig hom wc}~and~\ref{fig het wc}.}
  \label{fig degrees}
\end{figure}

\begin{figure*}[ht]
  \centering
  \includegraphics[width=\linewidth ]{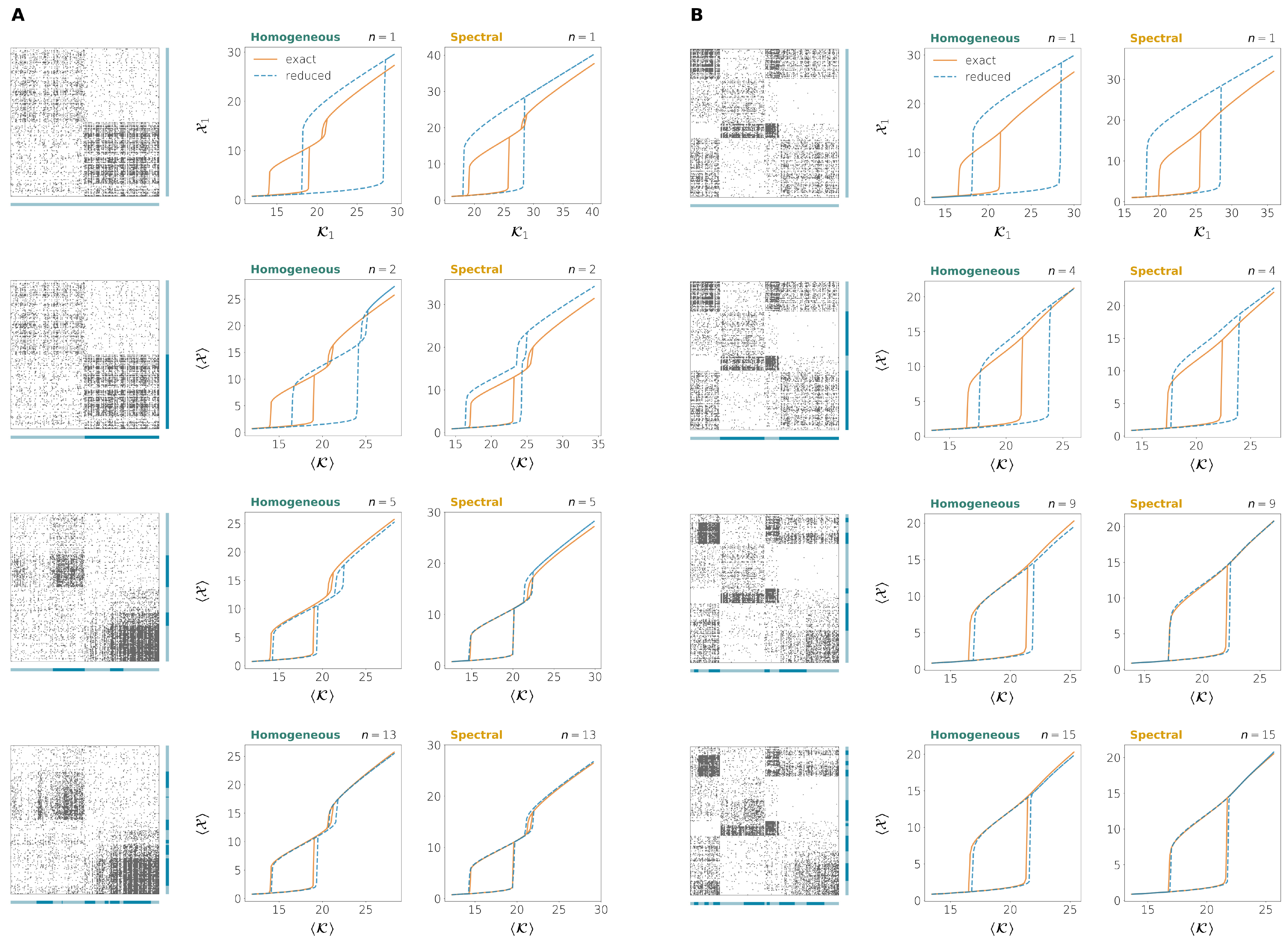}
  \caption{\small 
    Exact versus reduced bifurcation diagrams for heterogeneous directed networks and neuronal dynamics with $\tau = 0.3$, $\mu = 10$.
    \textbf{A}. Network on $N=200$ nodes and 2 communities with the same sizes and same mean connection densities as in Fig.~\ref{fig hom wc}A.
    \textbf{B}. Network on $N=200$ nodes and 4 communities with the same sizes and same mean connection densities as in Fig.~\ref{fig hom wc}B.
  }
  \label{fig het wc}
\end{figure*}

\begin{figure}[ht]
  \centering
  \includegraphics[width=\linewidth ]{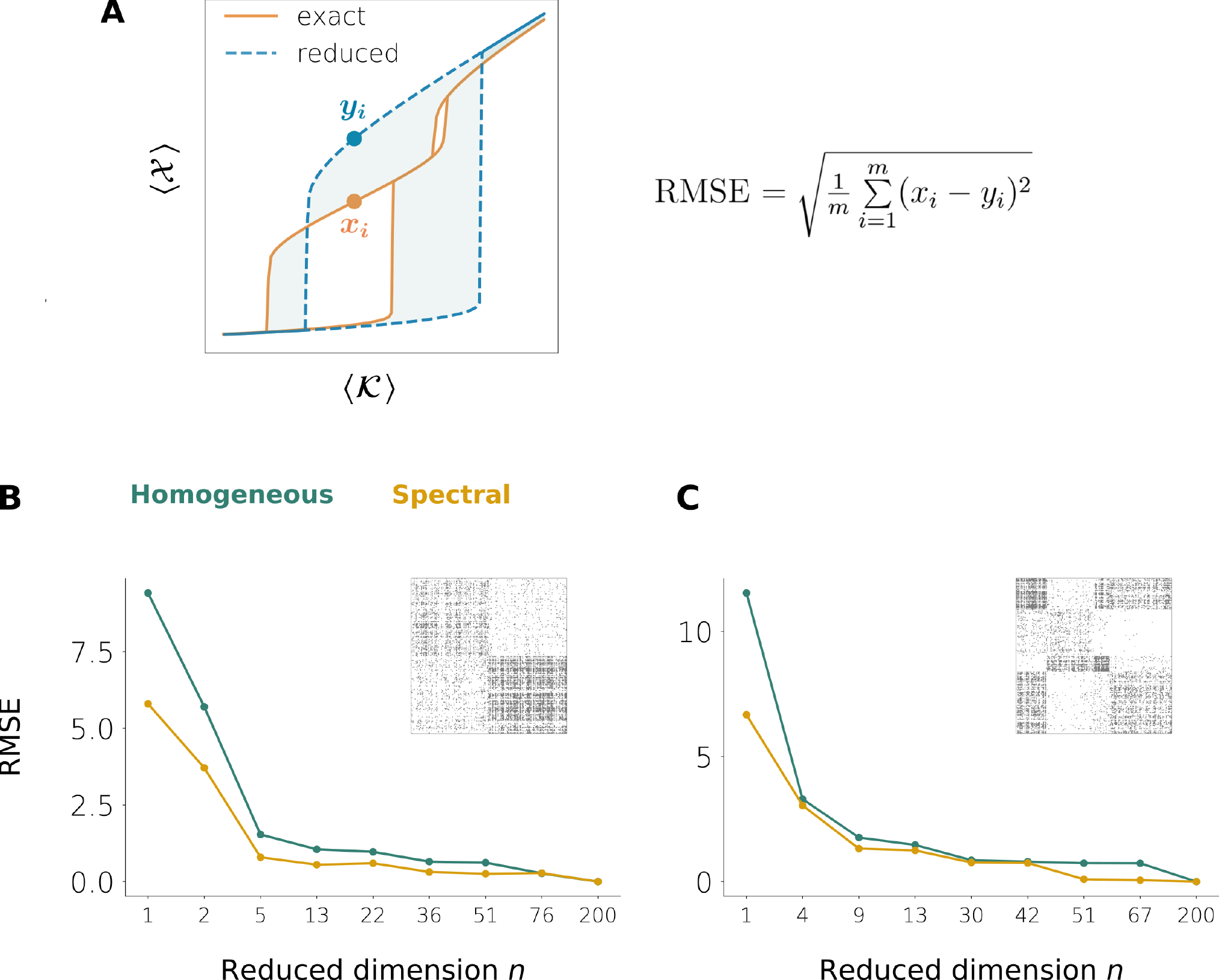}
  \caption{\small 
    Reduction error as the reduced dimension $n$ is increased.
    \textbf{A}. Root-mean-square-error (RMSE) between the exact and the reduced bifurcation diagrams as a measure of the reduction error.
    \textbf{B}. Network on $N=200$ nodes and 2 communities with the same mean connection densities and same dynamics as in Fig.~\ref{fig het wc}A.
    \textbf{C}. Network on $N=200$ nodes and 4 communities with the same mean connection densities and same dynamics as in Fig.~\ref{fig het wc}B.
  }
  \label{fig het dyn errors wc}
\end{figure}

\begin{figure}[ht]
  \centering
  \includegraphics[width=\linewidth ]{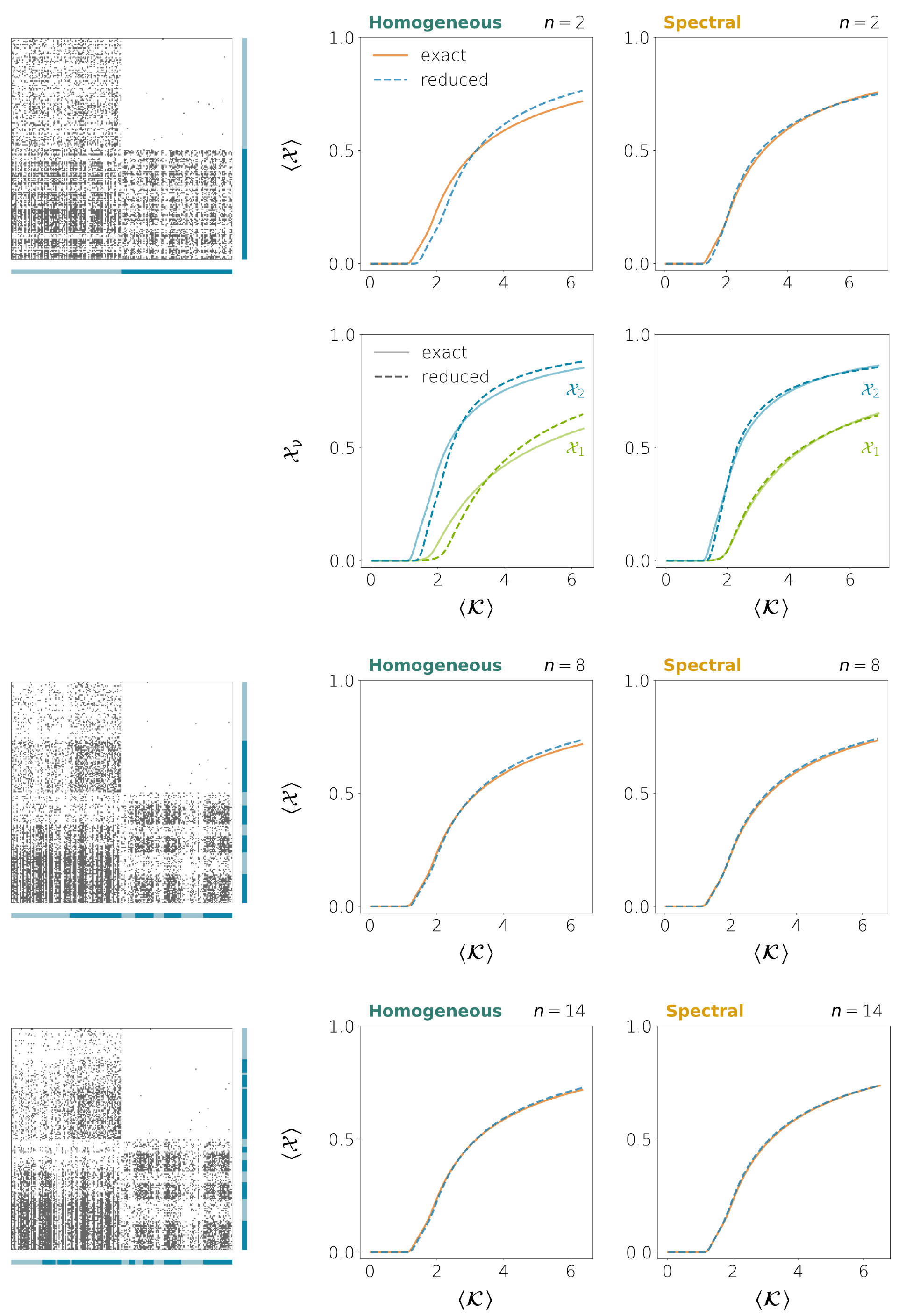}
  \caption{\small 
    Exact versus reduced bifurcation diagrams for a heterogeneous directed network and infectious dynamics with $\gamma = 1$.
    The network has $N=200$ nodes and 2 communities of the same size and with mean connection densities
      $p_{11} = 0.2$, $p_{12} = 0.001$,
      $p_{21} = 0.5$, $p_{22} = 0.3$.
  }
  \label{fig het SIS}
\end{figure}

\paragraph{Relevant heterogeneities in homogeneous networks.}

Even in homogeneous networks, there are situations in which correctly identifying the communities might not guarantee an accurate prediction of the bifurcation points by the reduced dynamics. Figure~\ref{fig hom n=2 eco} shows the bifurcation diagram corresponding to an ecological dynamics on a network composed of two communities. The connection density is relatively high within each group and also from group 1 to group 2 but not the other way around: there exist few connections from group 2 to group 1. When the initial state of the system is that of a low species abundance in both groups, the system stays in that state until the overall connection strength reaches a critical threshold that makes group 2 (the one with denser connectivity) jump to a high abundance state (Fig.~\ref{fig hom n=2 eco}A, blue continuous curves in the bottom plots). This first jump is well captured by the reduced dynamics (blue dashed curves). The reduced systems also predict that group 1 should remain in the low abundance state until a second threshold in connection strength is reached (green dashed curves). This threshold is nevertheless not well predicted because group 1 shifts to a high abundance state much earlier in the complete system (green continuous curves). The discrepancy between the exact and the reduced bifurcation diagrams is large for both reduction methods.

The source of the discrepancy is the heterogeneity in the input that nodes in group 1 receive from group 2: the majority of nodes in group 1 do not receive any input from group 2 but a few of them get input from exactly one node in group 2. In the original network, the input that this last set of nodes receives is enough for them to jump to a high abundance state right after the jump of group 2. Due to the high density within group 1, they recruit the other nodes in their group and the result is that the whole group shifts to a high abundance state right after group 2 does so. The reduced system with 2 observables (either homogeneous or spectral), however, only takes into account the \emph{average} input from group 2 to group 1. This average is not large enough to drive group 1 to the high abundance state unless the connection strengths are much larger.

This suggests that the \emph{deviations} from the average input are the cause of the mismatch between the original and the reduced system. In this example, a small deviation in the input received can change the whole state of the system, whereas the reduced system is not able to capture it because it relies on group input averages.

The problem can be solved by refining the partition until the inter-group input variability is small enough. We applied the same partition refinement described in the previous section and we found that a partition into a large number of groups ($n \approx 116$ for a network on $N=200$ nodes) is needed to properly capture the second transition (Fig.~\ref{fig hom n=2 eco}B). This is so because our refinement method is designed to reduce the difference between the maximal and minimal weighted degrees in a given group, regardless of the magnitude of these degrees (i.e., a degree difference of 1 is treated the same way when the degrees are of the order of 20 and when they are of the order of 1). The result is that a very fine refinement is needed to separate the nodes in group 1 that cause the whole group to jump.

But it is possible to define an \emph{ad hoc} partition that makes the second transition be well captured by the reduced system. If we separate from the rest the nodes in group 1 that have in-degree of 1 from group 2, we get a 3-group partition for which the reduced dynamics exhibits a reasonably good accuracy (Fig.~\ref{fig hom n=2 eco}C), similar to that obtained for the refinement with $n=116$. This illustrates that, in some systems, small heterogeneities in connectivity can make the reduced system to fail in predicting the true transitions but this might be solved by properly choosing the node partition.

\begin{figure*}[ht]
  \centering
  \includegraphics[width=\linewidth ]{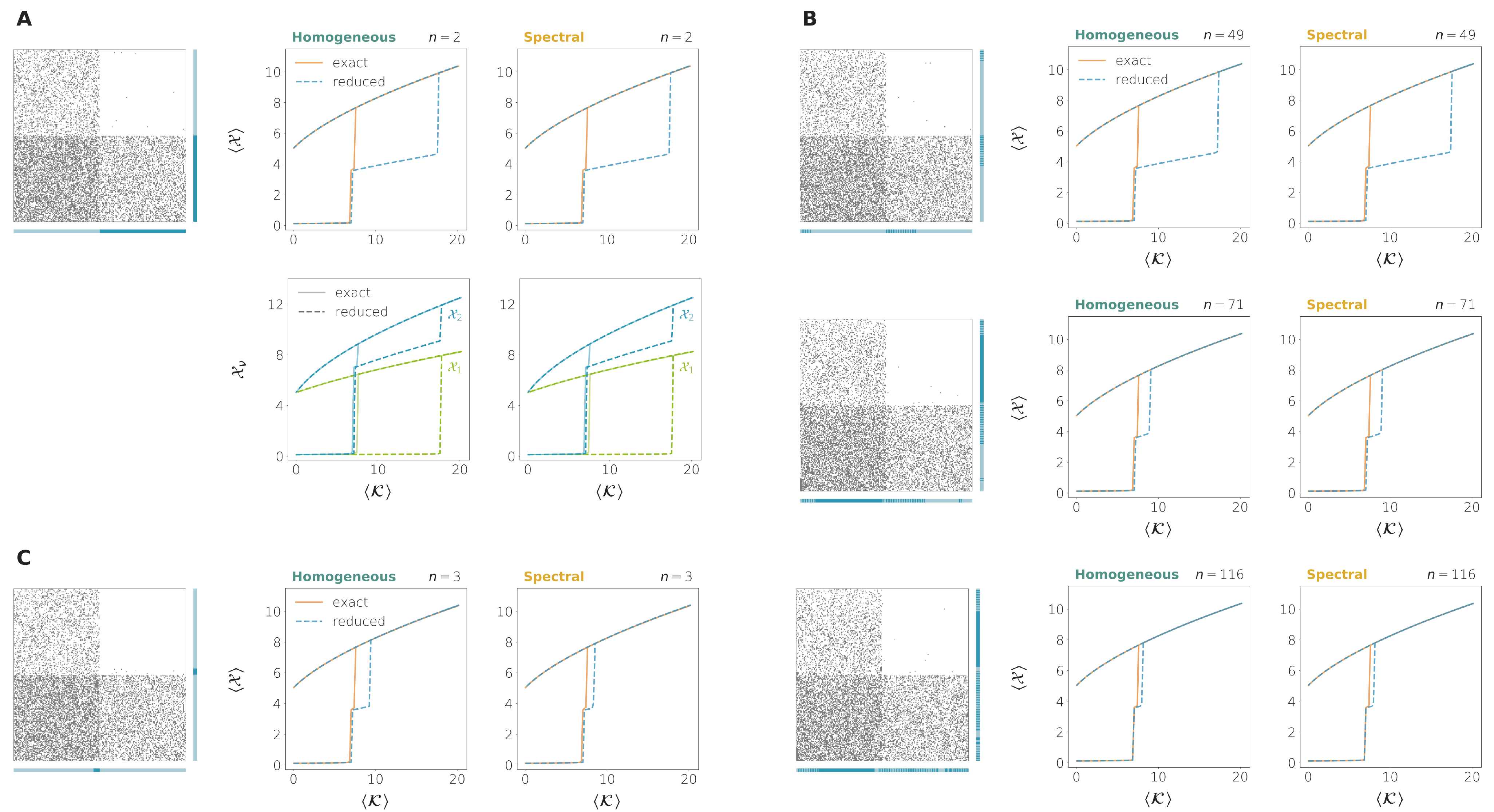}
  \caption{\small 
    Exact versus reduced bifurcation diagrams for a directed network on $N=200$ nodes generated from the SBM with 2 communities and ecological dynamics with $B = 0.1$, $C=1$, $K=5$, $D=6$, $E=0.9$, $H=0.1$. The community sizes and the mean connection densities between communities are the same as in Fig.~\ref{fig het SIS}.
    \textbf{A}. Bifurcation diagrams when the true communities are provided ($n=2$). The bottom panels show the exact versus reduced individual observables at equilibrium (red and blue curves).
    \textbf{B}. Bifurcation diagrams obtained as the original partition is progressively refined so as to reduce the intra-group weighted degree variability ($n=49$, $n=71$, $n=116$).
    {\textbf C.} Bifurcation diagrams when an \emph{ad hoc} partition is defined in which the nodes in group 1 that receive input from group 2 constitute a third group ($n=3$).
  }
  \label{fig hom n=2 eco}
\end{figure*}

\subsection{Robustness with respect to partition choice}

Until now we have analyzed the performance of the homogeneous and the spectral reductions for ``good'' partitions, that is, partitions that group nodes with similar connectivity properties. In particular, we have used partitions that correspond to the true modular organization of the network, together with successive refinements based on weighted in/out-degree variability. Doing so is necessary for the reductions to provide accurate results. However, in many situations we will not have access to any information regarding the presence of communities in the network. Instead, we will have to infer this information by analyzing the network structure, a task that can be problematic and might result in far from optimal partitions. Thus, a relevant question to be addressed is to what extent our reduction methods are sensitive to the partition choice. 

We analyzed the performance of our reduction methods as we randomly perturb a nearly optimal partition. Given an original partition $\cal{P}_0$, we select a pair of nodes at random and we flip their group membership. For $f \in [0,1]$, we repeat this process $\lfloor f N \rfloor$ times to get a new partition $\cal{P}_f$ that preserves the number of groups and the group sizes of $\cal{P}_0$ but which otherwise is a randomized version of $\cal{P}_0$ (Fig.~\ref{fig partition pert}A). We can repeat this many times to get an ensemble of perturbations of $\cal{P}_0$ for a given $f$ and then plot the average and the standard deviation of the discrepancy obtained according to each reduction method. Again, we define the discrepancy as the root-mean-square-error (RMSE) between the true bifurcation diagram and the one obtained from simulating the reduced dynamics (Fig.~\ref{fig het dyn errors wc}A).

Figures~\ref{fig partition pert}B~and~\ref{fig partition pert}C show the results for the 2-community networks (homogeneous and heterogeneous) explored in Figs. \ref{fig hom wc}A and \ref{fig het wc}A. We took as the original partitions the ones used in these figures for $n=2$ (homogeneous network) and $n=2$, $n=5$, $n=13$ (heterogeneous network). The plots show the average RMSE ($\pm$ standard deviation) relative to that of the spectral reduction for the original partition ($f=0$) as a function of $f$. Several conclusions can be derived from the results. First, the original partitions are close to be optimal because perturbing them results in larger errors on average. Second, the spectral method performs better than the homogeneous method, regardless of $f$. Third, the difference in performance between the two reduction methods is smaller for the homogeneous network, so the spectral method might be especially useful when dealing with heterogeneous networks. However, in the homogeneous network it is still preferable to use the spectral method, except if the true communities are perfectly identified ($f=0$, case in which the two methods exhibit the same performance). Finally, except in the heterogeneous network when $n=2$, the average error increase with respect to the case $f=0$ tends to be smaller for the spectral reduction, which suggests that this reduction is more robust to ill-posed node partitions.

\begin{figure*}[ht]
  \centering
  \includegraphics[width=\linewidth ]{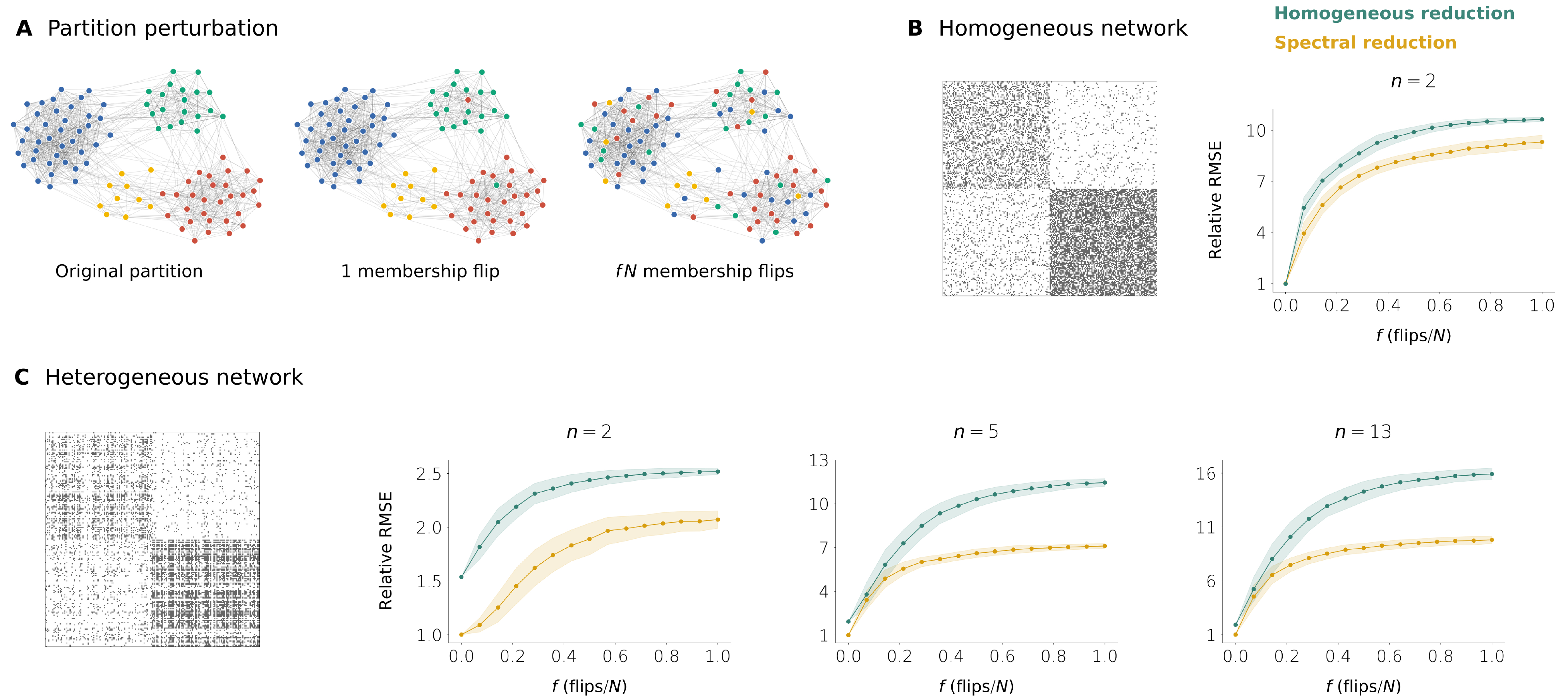}
  \caption{\small Sensitivity of the homogeneous and spectral reductions to partition perturbation.
    {\textbf A.} Schematics of the partition perturbation procedure.
    {\textbf{B--C.}} RMSE (relative to the RMSE of the spectral reduction when $f=0$) as a function of $f$ for the homogeneous (green) and the spectral (yellow) reductions. For each $f$, 300 random perturbations of the original partition were created. The lines show the average relative RMSE $\pm$ the standard deviation of the ensemble.
    {\textbf B.} Results for the homogeneous network of Fig.~\ref{fig hom wc}A when the original partition is the one used in that figure ($n=2$).
    {\textbf C.} Results for the heterogeneous network of Fig.~\ref{fig het wc}A. The original partitions are the ones shown in Fig.~\ref{fig het wc}A ($n=2$, $n=5$, $n=13$).
  }
  \label{fig partition pert}
\end{figure*}

\subsection{Dimension reduction on real networks}

We finally explore the performance of the homogeneous and spectral reduction methods when applied to dynamical systems on three networks obtained from real data.
We first consider the mutualistic ecological dynamics given by Eq.~\eqref{system ecology} together with a plant-pollinator network from the sub-alpine desert of Tenerife, in the Canary Islands~\cite{dupont_structure_2003}. It consists of 11 flowering plant species and 38 pollinator species (2 bird and 36 insect species), $N=49$. The authors assumed that an interaction exists between a plant and a pollinator whenever the pollinator had been observed probing for nectar or eating/collecting pollen from the plant. The resulting network of interactions is undirected and bipartite, as the imaginary plant-pollinator network depicted in Fig.~\ref{fig Dupont_ecology}A. 
When the interaction strength is large enough, the system has a single stable equilibrium state with large species abundances. But as the interactions are weakened, there is a transition to a bistable regime in which a  state characterized by very low species abundances is also possible. In the example shown here, this low abundance state is not an extinction state because we chose a positive migration rate $B$, meaning that even if the species went extinct, migration from other territories would make their numbers grow again.
In any case, the presence of such a low abundance state indicates that if the species' numbers are not large enough, the entire ecosystem can collapse into a state in which species can no longer benefit from the interaction with others and can only be maintained by migration. This kind of collapse might have catastrophic consequences for the ecosystem. 
The transition point is captured quite well by the spectral reduction, even when the reduction is 1-dimensional (Fig.~\ref{fig Dupont_ecology}C). The spectral method for $n=1$ also outperforms the degree-based 1-dimensional reduction defined by Gao et al.~\cite{gao_universal_2016} (Fig.~\ref{fig Dupont_ecology}B). As nodes are classified into groups of similar connectivity (by separating plants from pollinators first, $n=2$, and then by refining this partition), the tipping point is better approximated, the improvement being more evident in the homogeneous reduction, which provides poor results when $n$ is not large enough.

The second example is an undirected and binary social network based on Facebook contacts~\cite{maier_cover_2017} where $N=362$. To classify the nodes into groups of similar connectivity properties, we used the \texttt{minimize\underline{\space}blockmodel\underline{\space}dl} algorithm from the \texttt{graph-tool} Python library, which fits a given network into a stochastic block model~\cite{peixoto_efficient_2014}. The algorithm detected 15 communities and this partition was then refined so as to reduce the degree variability within each community. We simulated the spread of an epidemics in the network, given by the SIS dynamics of Eq.~\eqref{system SIS}. An important feature to study in such a system is whether the stable equilibrium is disease-free, which means that the disease will disappear completely in the long term, or if it is endemic, in which case the disease will remain. These two types of equilibria are characterized by $\langle \cal{X} \rangle = 0$ and $\langle \cal{X} \rangle > 0$, respectively. While the interactions are weak, the pathogen does not propagate enough through the network and the stable equilibrium is disease-free. The point at which an endemic stable equilibrium appears is almost perfectly captured by the 1-dimensional spectral reduction (Fig.~\ref{fig Maier_SIS}C), which again outperforms the degree-based reduction introduced by Gao et al.~\cite{gao_universal_2016} (Fig.~\ref{fig Maier_SIS}B). The homogeneous reduction is unable to capture this critical point even at $n=19$.

In the third example, we studied a neuronal dynamics on the \emph{C. elegans} connectome described in Ref.~\cite{chen_wiring_2006} where $N=279$. Nodes represent single neurons and the weighted and directed interactions represent connections among them (Fig.~\ref{fig CElegans_WC}A). We partitioned the nodes as we did for the contact network. The bifurcation diagram in this case is complex, with multiple bifurcation events, and it is not well represented by neither the degree-based reduction nor our reduced systems when the dimension is too small (Fig.~\ref{fig CElegans_WC}B~and~\ref{fig CElegans_WC}C). This can be interpreted as a signature of the system having a large effective dimension, much larger than the other two example systems. In general, for a fixed reduced dimension $n$, the spectral reduction provides a much more accurate picture of the bifurcation diagram than the homogeneous one.

\begin{figure}[ht]
  \centering
  \includegraphics[width=\linewidth ]{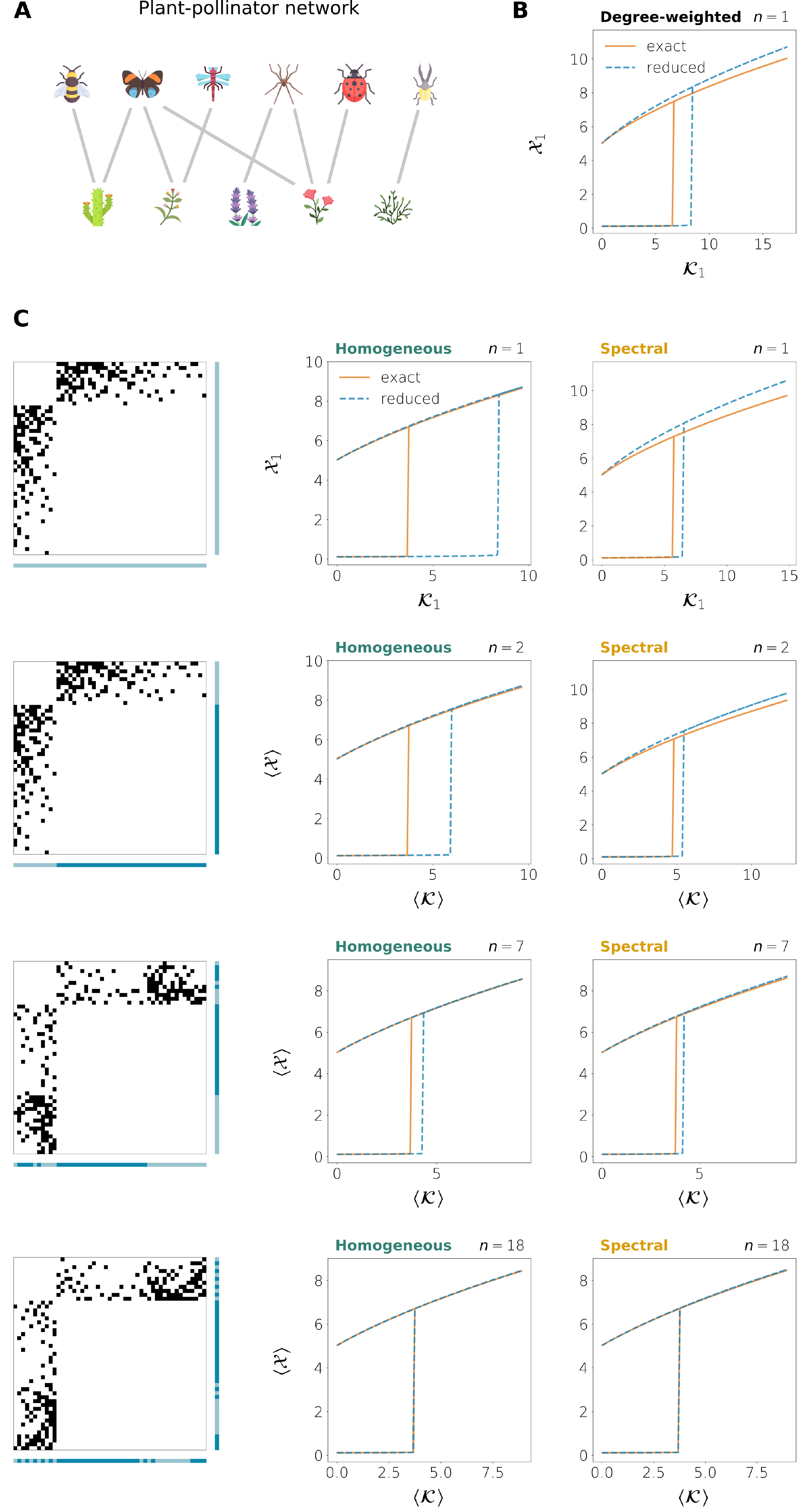}
  \caption{\small 
    Ecological dynamics (Eq.~\eqref{system ecology}), same parameters as in Fig.~\ref{fig hom n=2 eco}) on a plant-pollinator network of insects and plants in a Canary Island, Spain~\cite{dupont_structure_2003}. The interaction graph is bipartite, binary and undirected and contains 11 plants and 38 pollinators ($N=49$).
    \textbf{A}. Schematics of a plant-pollinator network. Icons have been taken from \href{https://flaticon.com}{\url{Flaticon.com}}.
    \textbf{B}. Bifurcation diagram obtained from the degree-based reduction defined in Ref.~\cite{gao_universal_2016}.
    \textbf{C}. Bifurcation diagrams for the homogeneous and the spectral methods when the whole network is taken as a single group ($n=1$) and for successive refinements of a partition on $n=2$ groups, each one representing one species type (plant or pollinator).
  }
  \label{fig Dupont_ecology}
\end{figure}

\begin{figure}[ht]
  \centering
  \includegraphics[width=\linewidth ]{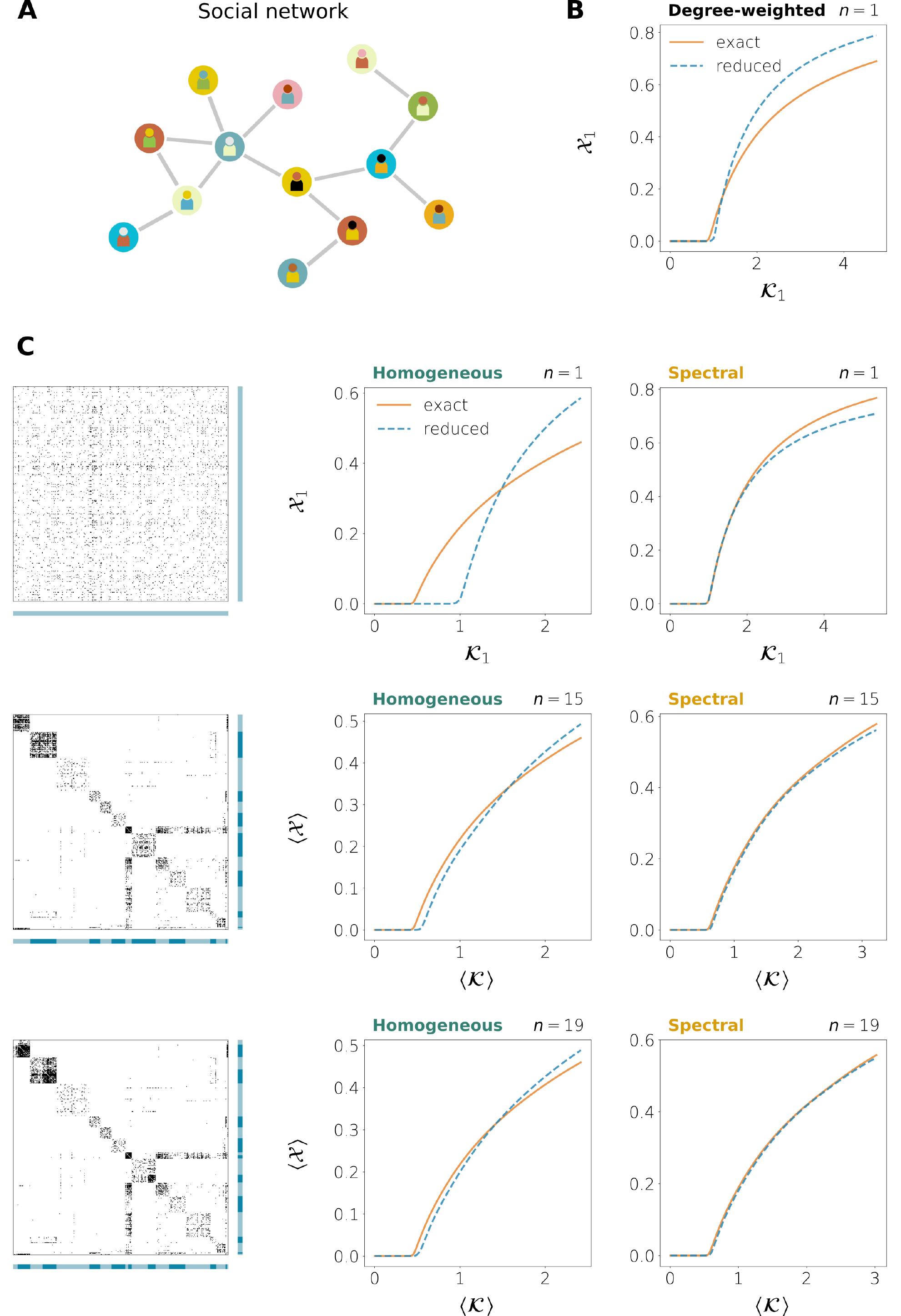}
  \caption{\small 
    SIS infectious dynamics (Eq.~\eqref{system SIS}, $\gamma=1$) on a social network based on Facebook contacts~\cite{maier_cover_2017}. The network has $N=362$ nodes and it is binary and undirected.
    \textbf{A}. Schematics of a social network.
    \textbf{B}. Bifurcation diagram obtained from the degree-based reduction defined in Ref.~\cite{gao_universal_2016}.
    \textbf{C}. Bifurcation diagrams for the homogeneous and the spectral methods when the whole network is taken as a single group ($n=1$) and for successive refinements of a partition on $n=15$ groups.
  }
  \label{fig Maier_SIS}
\end{figure}

\begin{figure}[ht]
  \centering
  \includegraphics[width=\linewidth ]{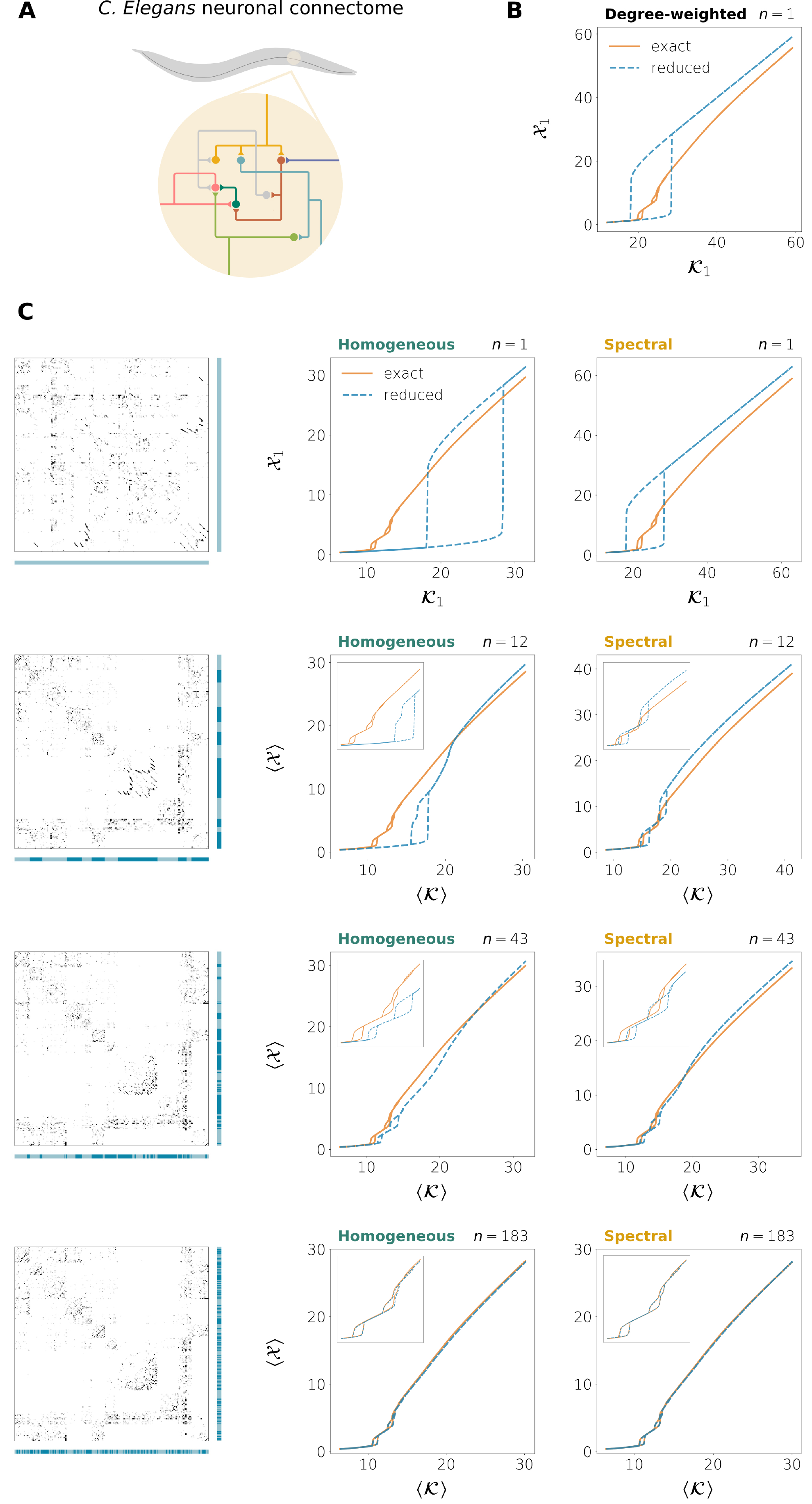}
  \caption{\small 
    Neuronal dynamics (Eq.~\eqref{system WC}), same parameters as in Fig.~\ref{fig hom wc}) on the connectome of the worm \emph{C. Elegans}~\cite{chen_wiring_2006}. The network is weighted and directed and contains $N=279$ neurons.
    \textbf{A}. Schematics of the \emph{C. Elegans} connectome.
    \textbf{B}. Bifurcation diagram obtained from the degree-based reduction defined in Ref. ~\cite{gao_universal_2016}.
    \textbf{C}. Bifurcation diagrams for the homogeneous and the spectral methods when the whole network is taken as a single group ($n=1$) and for successive refinements of a partition on $n=12$ groups. 
  }
  \label{fig CElegans_WC}
\end{figure}

\section{\label{sec:disc}Discussion}

We have presented a strategy to reduce the dimension of a dynamical system on a network of interactions.
The variables of the reduced system, the observables, are weighted averages of the activities within $n$ groups of nodes in the network. The node partition is defined \emph{a priori} based on the structure of the adjacency matrix and it is supposed to maximize the similarity of nodes that are in the same group. 
The key step in our reduction strategy is to calculate the reduction vectors that are used to construct the observables from the node activities. These vectors fully determine the reduced approximate dynamics, including a reduced adjacency matrix that specifies the magnitude of the coupling between observables.

We described two methods for computing the reduction vectors. In what we call the \emph{homogeneous reduction}, the observables are obtained from homogeneously averaging the activities within the different groups of nodes. The approximated reduced dynamics on these observables has the same form as the original dynamics. Also, the reduced adjacency matrix is such that the interaction from group $G_\rho$ to group $G_\nu$ is given by the average weighted in-degree that nodes in $G_\nu$ receive from nodes in $G_\rho$. This corresponds to what a naive observer would do to coarse-grain the original system.

Systems that are highly heterogeneous or for which it is difficult to define proper node partitions will not typically be well reduced by such a homogeneous coarse-graining. The main result of the present work is the definition of another procedure to construct the observables which can better cope with heterogeneities in the structure of interactions. In what we dub the \emph{spectral reduction}, the reduction vectors are no longer homogeneous over the nodes that form each of the different groups. Instead, they weigh the nodes differently so as to minimize the error of the approximated reduced dynamics. Finding the reduction vectors in this case requires solving a set of compatibility equations on these vectors. Despite the compatibility equations being generically incompatible, we proposed an algorithm for finding an approximate solution when the adjacency matrix is positive. The resulting approximate reduced dynamics is analogous in form to the original system except for the addition of a correction term.

We verified that both the homogeneous and the spectral reduction are suitable for reducing systems in which a proper node partition is identified and nodes in the same group have very similar connectivity profiles. In hindsight, this may come as no surprise. Indeed, a system composed of groups of nearly equivalent nodes can be naively reduced by identifying each group with its activity average; the averages then approximately obey the same dynamical laws as the original system with interaction strengths that come from averaging the weighted in-degrees from the different groups. This is exactly what our homogeneous reduction does. In these homogeneous networks with properly identified community structure, the spectral reduction provides almost no gain with respect to the homogeneous one.

We then analyzed the performance of the spectral reduction in more heterogeneous directed networks with known community structure. We found that the spectral reduction outperforms the homogeneous one and that it can be used to effectively reduce the dimension of these systems when a proper node partition is defined. The quality of the reduction can be enhanced by refining the partition so as to reduce the heterogeneity of nodes within each group. Doing so of course increases the reduced dimension $n$ and, with it, the complexity of the reduced dynamics. Our results suggest, however, that it is possible to find a compromise between increasing the number of groups and obtaining a reduced system of dimension significantly smaller than the original one because, unlike the homogeneous reduction, the spectral reduction can cope with a certain degree of inter-group heterogeneity. Similar results are obtained when real networks are analyzed, even if we do not know what the true community structure, if any, is. Provided that a prior study of their structure is performed to identify possible communities, the spectral reduction could contribute to understand and identify relevant features in the behavior of real complex systems.

However, classifying nodes into groups of similar connectivity profiles can be a difficult task. Even if different algorithms to this end are currently available, a search for communities will generally be imperfect, either because there are aspects of network architecture that can be difficult to extract or because a modular structure does not really exist in such networks. The presence of communities is often just an abstract notion that helps us to dissect and understand networks, but that fits to a network's structure only vaguely. For this reason, we would like our reduction method to be not too sensitive to errors in the definition of the node partition. We discovered that the spectral reduction is quite robust to partition perturbations, which makes it a good candidate to reduce systems in which communities cannot be well defined or identified.

Compared to other work on this topic, the dimension reduction strategy we have presented offers some important advantages. 
First, it can be applied to systems in which the adjacency matrix is weighted and not necessarily symmetric (i.e. directed networks). 
Second, the dimension of the reduced dynamics, $n$, is a variable that can be chosen to better adapt the reduction to the particular system under study. This feature adds flexibility to the reduction and can be very useful when dealing with systems of different degrees of complexity. The dimension of a proper reduction also provides information about the effective dimension of the system under study, and this information would be more difficult to infer if the dimension of the reduction were fixed a priori.
Finally, since our observables are weighted averages of the node activities within the different groups, they have a very clear interpretation. The reduced system not only allows us to approximate critical parameters at which the system's behavior qualitatively changes, but provides us with a direct measure of how the activity within these groups will be affected by a parameter perturbation around the critical value. For example, in a system modeling an ecological dynamics, our observables reflect the overall abundance of different groups of species, so by studying the reduced system we can infer which species groups could go extinct as a parameter is perturbed. 

Our work leaves also some open questions. To construct the spectral reduction, one has to approximately solve the compatibility equations on the reduction vectors. We have proposed a method for doing so when the adjacency matrix is positive, but other procedures should be defined to deal with general types of interactions. In neuronal networks, for example, inhibition plays a crucial role along with excitation, but so far our method can only be applied to networks composed exclusively of excitatory or inhibitory units.
Another limitation of the present work is that it is still unclear how to find the dimension $n$ of a proper reduction without comparing the reduced system with the original one. Since the performance of the reduction is tightly related to the properties of the chosen partition---i.e. the number of groups and the intra-group heterogeneity---it is likely that some measure on the network's structure under the chosen partition can be used to predict how accurate the reduced dynamics will be. Elucidating these and other issues will require additional research in the coming years.

\section*{Acknowledgments}
 This work was supported by the Fonds de recherche du Qu\'ebec -- Nature et technologies (V.T.), the Natural Sciences and Engineering Research Council of Canada (V.T., P.D., A.A.), and the Sentinel North program of Universit\'e Laval, funded by the Canada First Research Excellence Fund (M.V., V.T., P.D., A.A.). Moreover, we would like to thank Edward Laurence for stimulating discussions; his approximation method by partitioning~\cite[Annexe B.1.2]{laurence2020thesis} has influenced the early stages of our research. We also acknowledge Calcul Québec and Compute Canada for their technical support and computing infrastructures.

\bibliographystyle{unsrtnat}

\pagebreak
\renewcommand{\theequation}{S\arabic{equation}}
\renewcommand{\thefigure}{S\arabic{figure}}
\setcounter{page}{1}
\setcounter{equation}{0}
\setcounter{figure}{0}
\onecolumn
\thispagestyle{plain}

\begin{center}
  {\LARGE Dimension reduction of dynamics on modular and heterogeneous directed networks}\vspace{0.25\baselineskip}\\
  {\large --- Supplementary Information ---}
\end{center}

\section{Preliminaries}

We start by presenting some preliminary assumptions and definitions.

\begin{assumption}[Complete system] \label{ass system}
For $N \in \mathbb{N}$, there are $N$ activity functions $x_1, \cdots, x_N$ of class $\mathscr{C}^2$ with
\begin{equation}
\begin{array}{cccc}
x_i: & \mathbb{R} & \rightarrow & \mathbb{R} \\
& t & \mapsto & x_i(t).
\end{array}
\end{equation}
These functions fulfill the system of ODEs
\begin{equation}
  \dot{x}_i = f(x_i) + \sum \limits_{j=1}^N w_{ij} \, g(x_i, x_j), \qquad i \in \{1, \cdots, N\},
  \label{system app}
\end{equation}
where $f: \, \mathbb{R} \rightarrow \mathbb{R}$, $g: \, \mathbb{R}^2 \rightarrow \mathbb{R}$ are functions of class $\mathscr{C}^1$, and $w_{ij}$ denotes a real number for all $i,j\in \{1,\cdots,N\}$.
\end{assumption}

\begin{definition}[Adjacency and in-degree matrices]
Given a system under Assumption \ref{ass system}, we call $\bm{W}=(w_{ij})_{i,j=1}^N$ the \emph{adjacency matrix}. The latter defines  a weighted directed network of $N$ nodes and $M$ links, where $M$ is the number of nonzero elements in $\bm{W}$. We define the \emph{in-degree matrix} as the $N \times N$ diagonal matrix constructed from the weighted in-degrees of the nodes in the network, that is,
\begin{equation}
\bm{K} = \text{diag}(k_1, \cdots, k_N), \hspace{0,3cm}
k_i = \sum \limits_{j=1}^N w_{ij}.
\end{equation}
\end{definition}

\begin{assumption}[Reduced system] \label{ass reduction}
Given a system defined by Assumption \ref{ass system}, there is a $n \in \mathbb{N}$, $n \leq N$, and a partition of the nodes into $n$ non-empty groups $G_1, \cdots, G_n$, that is, $G_\nu \neq \varnothing$ for all $\nu$, $\cup_{\nu=1}^n G_\nu = \{1,\cdots,N\}$, and $G_\nu \cap G_\rho = \varnothing$ for $\nu \neq \rho$. There is also a set of $n$ reduction vectors $\bm{a}_1, \cdots, \bm{a}_n$ where $\bm{a}_\nu = (a_{\nu i})_{i=1}^N \in \mathbb{R}^N$ is the reduction vector associated to group $G_\nu$, and
\begin{equation}
\begin{array}{lllll}
& \displaystyle \sum \limits_{i=1}^N a_{\nu i} = 1,
& a_{\nu i} = 0 \,\,\text{ if }\,\, i \notin G_\nu ,
& \nu \in \{1, \cdots, n\}.
\end{array}
\end{equation}
There are $n$ observables $\cal{X}_1, \cdots, \cal{X}_n$ of class $\mathscr{C}^1$ with
\begin{equation}
\begin{array}{cccc}
{\cal X}_\nu: & \mathbb{R} & \rightarrow & \mathbb{R} \\
& t & \mapsto & {\cal X}_\nu(t)
\end{array}
\end{equation}
which are constructed from the reduction vectors and the activity functions through
\begin{equation}
\cal{X}_\nu := \sum \limits_{i=1}^N a_{\nu i} x_i, \hspace{0,3cm} \nu \in \{1, \cdots, n\}.
\label{observables app}
\end{equation}
\end{assumption}

\begin{assumption}[Ordered partition] \label{ass order}
Let $\{G_1, \cdots, G_n\}$ be a partition of $\{1, \cdots, N\}$, that is, $G_\nu \neq \varnothing$ for all $\nu$, $\cup_{\nu=1}^n G_\nu = \{1,\cdots,N\}$, and $G_\nu \cap G_\rho = \varnothing$ for $\nu \neq \rho$. The indices within each partition set are consecutive integers:
\begin{equation}
G_\nu = \left\lbrace 1 + \sum \limits_{\rho=1}^{\nu-1} m_\rho, \cdots, \sum \limits_{\rho=1}^{\nu} m_\rho \right\rbrace,
\end{equation}
where $m_\nu = | G_\nu |$ is the size of $G_\nu$.
Each index in $\{1, \cdots, m_\nu \}$ is mapped to an index in $G_\nu \subseteq \{1, \cdots, N\}$ by
\begin{equation}
\begin{array}{rrll}
p_\nu: & \{1, \cdots, m_\nu \} & \rightarrow & G_\nu \\
& i & \mapsto &
p_\nu(i) := i + \sum \limits_{\rho=1}^{\nu-1} m_\rho.
\end{array}
\end{equation}

\end{assumption}

Without loss of generality we can always suppose that Assumption \ref{ass order} holds when dealing with a system defined by Assumptions \ref{ass system} and \ref{ass reduction} (it is enough to permute the node indices $1, \cdots, N$). This allows us to express the adjacency and in-degree matrices as follows.

\begin{definition}[Group-to-group interaction and in-degree matrices] \label{def submatrices}
Under Assumptions \ref{ass system} and \ref{ass order}, for each pair of indices $\nu, \rho \in \{1, \cdots, n\}$, we define the submatrix of interactions from $G_\rho$ to $G_\nu$, $\bm{W}_{\nu \rho} = ( [\bm{W}_{\nu \rho}]_{ij})_{i,j}$, of dimension $m_\nu \times m_\rho$, and the in-degree submatrix of nodes in $G_\nu$ from nodes in $G_\rho$, $\bm{K}_{\nu \rho} = \text{diag} ( [\bm{K}_{\nu \rho}]_{ii})_i$, of dimension $m_\nu \times m_\nu$, as
\begin{equation}
\begin{array}{lllll}
[\bm{W}_{\nu \rho}]_{ij} &=& w_{p_\nu(i) p_\rho(j)} \\\\
\left[ \bm{K}_{\nu \rho} \right]_{i i} &=& \displaystyle \sum \limits_{j \in G_\rho} w_{p_\nu(i) j}
&=& k_{p_\nu(i)}^\rho
\end{array}
\end{equation}
for $i \in \{1, \cdots, m_\nu \}$ and $j \in  \{1, \cdots, m_\rho \}$, where $ k_{p_\nu(i)}^\rho$ is the weighted in-degree of node $p_\nu(i)$ from $G_\rho$.
This allows to express the adjacency matrix $\bm{W}$ in the block form
\begin{equation}
\bm{W} = \left( 
\begin{array}{ccc}
\bm{W}_{11} & \cdots & \bm{W}_{1n} \\
\vdots & \ddots & \vdots \\
\bm{W}_{n1} & \cdots & \bm{W}_{nn}
\end{array} \right)
\label{decomposed adjacency matrix app}
\end{equation}
and the in-degree matrix as
\begin{equation}
\bm{K} = \left( 
\begin{array}{ccc}
\bm{K}_{11} + \cdots + \bm{K}_{1n} & \cdots & 0 \\
\vdots & \ddots & \vdots \\
0 & \cdots & \bm{K}_{n1} + \cdots + \bm{K}_{nn}
\end{array} \right).
\label{decomposed degree matrix app}
\end{equation}
\end{definition}

It is also useful to define, for each $\nu \in \{1, \cdots, n\}$, the components of the reduction vector $\bm{a}_\nu$ that correspond to the indices within $G_\nu$ (the other components are zero):

\begin{definition}[Partial reduction vectors] \label{def red vectors hat}
Under Assumptions \ref{ass reduction} and \ref{ass order}, for each $\nu \in \{1,\cdots,n\}$, 
the $\nu$-th \emph{partial reduction vector} 
$\bm{\widehat{a}}_\nu = ( \widehat{a}_{\nu i} )_{i=1}^{m_\nu} \in \mathbb{R}^{m_\nu}$ is such that
\begin{equation}
\widehat{a}_{\nu i} := a_{\nu \, p_\nu(i)},\qquad i\in\{1,\ldots, m_\nu\}.
\label{def a hat app}
\end{equation}
Notice that the normalization condition still holds on $\bm{\widehat{a}}_\nu$:
$\sum \limits_{i=1}^{m_\nu} \widehat{a}_{\nu i} = 1.$ 
\end{definition}

\section{Approximate reduced dynamics} \label{sec: app closing the dynamics}

Here we describe two possible strategies for reducing the dimension of a system defined by Assumption \ref{ass system} by means of the observables presented in Assumption \ref{ass reduction}. In both cases, we approximate $f(x_i$) and $g(x_i,x_j)$ by Taylor polynomials around the observables associated to the groups to which $i$ and $j$ belong. The reason for this is that nodes will be partitioned into groups so that nodes in the same group have similar connectivity properties. If the partition is well chosen, the activities of nodes in the same group should be close to each other and also close to the corresponding observable. The two strategies result in what we have dubbed the \emph{homogeneous} and the \emph{spectral} reductions.
We first provide a lemma and some useful notation.

\begin{lemma}[Exact reduced dynamics]\label{lemma:Xnu dot app}
Under Assumptions \ref{ass system} and \ref{ass reduction}, the observables fulfill the system of ODEs
\begin{equation}
\cal{ \dot{X}}_\nu 
= \displaystyle \sum \limits_{i=1}^N a_{\nu i} f(x_i) + \sum \limits_{i,j=1}^N a_{\nu i} w_{ij} g(x_i,x_j),\qquad \nu\in\{1,\ldots, n\}.
\label{Xnu dot app}
\end{equation}
\end{lemma}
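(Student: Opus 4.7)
The statement is essentially a direct computation: it just says that differentiating the linear combination defining each observable and substituting the original ODEs yields the claimed formula. There is no real obstacle here; the main thing to verify is that the differentiation under the finite sum is legitimate and that the regularity hypotheses line up.

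My plan is the following. First, I would start from the definition of the observables given in Eq.~\eqref{observables app}, namely $\cal{X}_\nu = \sum_{i=1}^N a_{\nu i}\,x_i$. Since $n$ and $N$ are finite, the coefficients $a_{\nu i}$ are real constants (they do not depend on $t$), and each $x_i$ is of class $\mathscr{C}^2$ by Assumption~\ref{ass system}, I can differentiate term by term to obtain
\begin{equation*}
\cal{\dot X}_\nu(t) \;=\; \sum_{i=1}^N a_{\nu i}\,\dot x_i(t),\qquad \nu\in\{1,\ldots,n\}.
\end{equation*}
This is consistent with the stated regularity of $\cal{X}_\nu$ in Assumption~\ref{ass reduction}: a finite linear combination of $\mathscr{C}^2$ functions is at least $\mathscr{C}^1$.

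Next, I would substitute the evolution equation~\eqref{system app} from Assumption~\ref{ass system} into each $\dot x_i$, giving
\begin{equation*}
\cal{\dot X}_\nu \;=\; \sum_{i=1}^N a_{\nu i}\!\left[ f(x_i) + \sum_{j=1}^N w_{ij}\,g(x_i,x_j) \right].
\end{equation*}
Distributing the outer sum over the two terms in the bracket and interchanging the order of summation in the double sum (both sums being finite, this is unconditionally valid) yields exactly Eq.~\eqref{Xnu dot app}. The claim then holds for every $\nu\in\{1,\ldots,n\}$, which completes the argument.

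The only subtlety worth flagging is the regularity check, since the lemma is implicitly asserting that the right-hand side of \eqref{Xnu dot app} equals a well-defined derivative. This is automatic here because $f,g\in\mathscr{C}^1$ and $x_i\in\mathscr{C}^2$, so the composition $t\mapsto f(x_i(t))$ and $t\mapsto g(x_i(t),x_j(t))$ are continuous, and the finite linear combination appearing on the right is therefore continuous as well. No deeper analytic machinery or structural property of the partition or of $\bm{W}$ is needed; the lemma is a purely kinematic consequence of linearity of the observables.
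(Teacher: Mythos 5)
Your proposal is correct and matches the paper's approach: the paper simply states that the result ``follows directly'' from Assumptions \ref{ass system} and \ref{ass reduction}, and your write-up fills in exactly the intended computation (term-by-term differentiation of the finite linear combination, substitution of Eq.~\eqref{system app}, and reordering of finite sums). The regularity remarks are a harmless elaboration of what the paper leaves implicit.
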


\begin{proof}
The result follows directly from Assumptions \ref{ass system} and \ref{ass reduction}.
\hfill\end{proof}

\begin{notation}[Big $\mathcal O$]
Let $g: \, \mathbb{R}^2 \rightarrow \mathbb{R}$, $h: \, \mathbb{R}^4 \rightarrow \mathbb{R}$ be two functions. Let $x, X \in \mathbb{R}$, 
$\bm{x} =(x_1,x_2)^T, \bm{X} =(X_1,X_2)^T \in \mathbb{R}^2$. 
We write
\begin{equation}
g(x, X) = {\cal O} \left( x - X \right)
\end{equation}
whenever there exists $K \in \mathbb{R}$ such that
\begin{equation}
\lim \limits_{x \rightarrow X} \frac{g(x,X)}{x-X} = K.
\end{equation}
We write
\begin{equation}
h(x_1, x_2, X_1, X_2) = {\cal O} \left( (x_1 - X_1) (x_2 - X_2) \right)
\end{equation}
whenever there exists $L \in \mathbb{R}$ such that
\begin{equation}
\lim \limits_{ \| \bm{x}-\bm{X} \| \rightarrow 0 } \frac{h(x_1,x_2,X_1,X_2)}{(x_1-X_1)(x_2-X_2)} = L.
\end{equation}
\end{notation}

\begin{notation}[Function approximation]
Let $f: \, \mathbb{R^N} \rightarrow \mathbb{R}$, $F: \, \mathbb{R}^n \rightarrow \mathbb{R}$ be two functions. Given $n \leq N$, let $\{G_1, \cdots, G_n\}$ be a partition of $\{1, \cdots, N\}$. For $\bm{x} \in \mathbb{R}^N$ and $\bm{X} \in \mathbb{R}^n$, we write
\begin{equation}
f( \bm{x} ) \overset{{\cal O}_1}{\approx} F( \bm{X} )
\end{equation}
whenever
\begin{equation}
f( \bm{x} ) - F( \bm{X} ) = \sum \limits_{\nu=1}^n \sum \limits_{i \in G_\nu} {\cal O} \left( x_i - X_\nu \right).
\end{equation}
We write
\begin{equation}
f( \bm{x} ) \overset{{\cal O}_2}{\approx} F( \bm{X} )
\end{equation}
whenever
\begin{equation}
f( \bm{x} ) - F( \bm{X} ) = \sum \limits_{\nu,\rho=1}^n \sum
\limits_{\substack{ i \in G_\nu \\ j \in G_\rho}}
{\cal O} \left( (x_i - X_\nu) (x_j - X_\rho) \right).
\end{equation}
\end{notation}

With this notation in hand, we can present the homogeneous and the spectral reductions.

\begin{proposition}[Homogeneous reduction] \label{prop homogeneous}
Suppose that Assumptions \ref{ass system} and \ref{ass reduction} hold.
If
\begin{equation}
a_{\nu i} = \left\lbrace 
\begin{array}{ll}
1/|G_\nu| & \text{ if } i \in G_\nu, \\
0 & \text{ otherwise},
\end{array} \right.
\end{equation}
then
\begin{equation}
\begin{array}{lll}
\cal{\dot{X}}_\nu 
&\overset{{\cal O}_1}{\approx} & \displaystyle 
f(\cal{X}_\nu)
+ \sum \limits_{\rho=1}^n \cal{W}_{\nu \rho} \, g(\cal{X}_\nu, \cal{X}_\rho), \\
\end{array}
\label{Xnu dot homogeneous app}
\end{equation}
where
\begin{equation}
\cal{W}_{\nu \rho} := \sum \limits_{\substack{ i \in G_\nu \\ j \in G_\rho}} a_{\nu i} w_{ij}
= \sum \limits_{i \in G_\nu} a_{\nu i} k_i^\rho.
\end{equation}
\end{proposition}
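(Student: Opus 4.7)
The plan is to start from the exact dynamics in Lemma~\ref{lemma:Xnu dot app}, replace $f(x_i)$ and $g(x_i,x_j)$ by their values at the corresponding observables via a zeroth-order Taylor expansion, and verify that the residuals fit the $\cal{O}_1$ bookkeeping convention. Substituting the homogeneous weights $a_{\nu i}=1/|G_\nu|$ (for $i\in G_\nu$, zero otherwise) into Eq.~\eqref{Xnu dot app} gives
\begin{equation*}
\cal{\dot X}_\nu = \frac{1}{|G_\nu|}\sum_{i\in G_\nu} f(x_i) + \frac{1}{|G_\nu|}\sum_{i\in G_\nu}\sum_{j=1}^N w_{ij}\, g(x_i,x_j).
\end{equation*}

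Next, since $f$ and $g$ are of class $\mathscr{C}^1$, Taylor's theorem (or equivalently the mean value theorem applied componentwise to $g$) yields, for $i\in G_\nu$ and $j\in G_\rho$,
\begin{equation*}
f(x_i)=f(\cal{X}_\nu)+\cal{O}(x_i-\cal{X}_\nu), \qquad g(x_i,x_j)=g(\cal{X}_\nu,\cal{X}_\rho)+\cal{O}(x_i-\cal{X}_\nu)+\cal{O}(x_j-\cal{X}_\rho).
\end{equation*}
I would insert these expansions into the displayed expression for $\cal{\dot X}_\nu$, split the inner sum as $\sum_{j=1}^N=\sum_{\rho=1}^n\sum_{j\in G_\rho}$, and collect the leading contributions. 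The self-dynamics part yields $f(\cal{X}_\nu)$ after averaging over $i\in G_\nu$, since the leading term no longer depends on $i$. In the coupling part, the leading contribution is
\begin{equation*}
\sum_{\rho=1}^n\left(\frac{1}{|G_\nu|}\sum_{i\in G_\nu} k_i^\rho\right) g(\cal{X}_\nu,\cal{X}_\rho)=\sum_{\rho=1}^n \cal{W}_{\nu\rho}\, g(\cal{X}_\nu,\cal{X}_\rho),
\end{equation*}
using $\sum_{j\in G_\rho} w_{ij}=k_i^\rho$ together with the definition of $\cal{W}_{\nu\rho}$ stated in the proposition.

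Finally, I would check that every leftover term can be cast in the $\cal{O}_1$ form, i.e., as a sum over nodes $k$ of terms $\cal{O}(x_k-\cal{X}_{\sigma(k)})$ where $\sigma(k)$ is the group containing $k$. Residuals inherited from the $f$-expansion carry the prefactor $1/|G_\nu|$ attached to an index $i\in G_\nu$; residuals from the $g$-expansion either carry $(1/|G_\nu|)\sum_{j=1}^N w_{ij}$ attached to an index $i\in G_\nu$, or the weight $a_{\nu i}w_{ij}$ attached to an index $j\in G_\rho$. In every case the coefficient is a finite scalar, so regrouping by the index $k$ and its group $\sigma$ rewrites the total residual as $\sum_{\sigma=1}^n\sum_{k\in G_\sigma}\cal{O}(x_k-\cal{X}_\sigma)$, exactly matching the definition of $\overset{\cal{O}_1}{\approx}$. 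I do not anticipate any real obstacle: the argument is essentially a zeroth-order Taylor expansion, and the only care needed is the bookkeeping that pairs each residual with the observable of its own group.
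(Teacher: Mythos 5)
Your proposal is correct and follows essentially the same route as the paper's proof: start from the exact observable dynamics of Lemma~\ref{lemma:Xnu dot app}, apply the zeroth-order Taylor approximations $f(x_i)\overset{{\cal O}_1}{\approx}f(\cal{X}_\nu)$ and $g(x_i,x_j)\overset{{\cal O}_1}{\approx}g(\cal{X}_\nu,\cal{X}_\rho)$, and collect terms using the normalization $\sum_{i\in G_\nu}a_{\nu i}=1$ and the definition of $\cal{W}_{\nu\rho}$. Your explicit check that the residuals regroup into the $\sum_\sigma\sum_{k\in G_\sigma}{\cal O}(x_k-\cal{X}_\sigma)$ form is a slightly more careful piece of bookkeeping than the paper spells out, but it is the same argument.
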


\begin{proof}
By Taylor's Theorem, we can approximate $f(x_i$) and $g(x_i,x_j)$ around the corresponding observables as
\begin{equation}
\begin{array}{lllll}
f(x_i) & \overset{{\cal O}_1}{\approx} & f( \cal{X}_\nu) && \text{ for } i \in G_\nu \\
g(x_i,x_j) & \overset{{\cal O}_1}{\approx} & g(\cal{X}_\nu, \cal{X}_\rho) && \text{ for } i \in G_\nu, j \in G_\rho.
\end{array}
\end{equation}
Taking into account that, by construction, $a_{\nu k} = 0$ whenever $k \notin G_\nu$, from Lemma \ref{lemma:Xnu dot app} we get
\begin{equation}
\begin{array}{lll}
\cal{\dot{X}}_\nu 
& \overset{{\cal O}_1}{\approx} & \displaystyle 
f(\cal{X}_\nu) \sum \limits_{i \in G_\nu} a_{\nu i} 
+ \sum \limits_{\rho=1}^n g(\cal{X}_\nu, \cal{X}_\rho) \sum \limits_{\substack{ i \in G_\nu \\ j \in G_\rho}} a_{\nu i} w_{ij} \\

&=& \displaystyle
f(\cal{X}_\nu)
+ \sum \limits_{\rho=1}^n \cal{W}_{\nu \rho} \, g(\cal{X}_\nu, \cal{X}_\rho).\\
\end{array}
\end{equation}
\hfill\end{proof}

\begin{proposition}[Spectral reduction] \label{prop spectral}
Suppose that Assumptions \ref{ass system}, \ref{ass reduction} and \ref{ass order} hold.
Let $\{ \bm{W}_{\nu \rho} \}_{\nu, \rho}$, $\{ \bm{K}_{\nu \rho} \}_{\nu, \rho}$ and $\{ \bm{\widehat{a}}_\nu \}_\nu$ be the sets of matrices and vectors of Definitions \ref{def submatrices} and \ref{def red vectors hat}.
If there exist two matrices 
$\bm{\mu}=(\mu_{\nu \rho})_{\nu,\rho}$
and 
$\bm{\lambda}=(\lambda_{\nu \rho})_{\nu,\rho}$
of dimension $n \times n$ such that the partial reduction vectors fulfill the compatibility equations
\begin{subequations}
\begin{align}
    \bm{K}_{\nu \rho} \bm{\widehat{a}}_\nu = \mu_{\nu \rho} \bm{\widehat{a}}_\nu,
    \label{comp eq K app} \\
    \bm{W}_{\nu \rho}^T \bm{\widehat{a}}_\nu = \lambda_{\nu \rho} \bm{\widehat{a}}_\rho,
    \label{comp eq W app}
\end{align}
\end{subequations}
then
\begin{equation}
\begin{array}{lll}
\cal{\dot{X}}_\nu 
& \overset{{\cal O}_2}{\approx} & \displaystyle 
f(\cal{X}_\nu)
+ \sum \limits_{\rho=1}^n \cal{W}_{\nu \rho} \, g(\cal{X}_\nu, \cal{X}_\rho), \\
\end{array}
\label{Xnu dot spectral without correction app}
\end{equation}
where
\begin{equation}
\cal{W}_{\nu \rho} := \sum \limits_{\substack{ i \in G_\nu \\ j \in G_\rho}} a_{\nu i} w_{ij}
= \sum \limits_{i \in G_\nu} a_{\nu i} k_i^\rho.
\label{def calW nu rho app}
\end{equation}

\end{proposition}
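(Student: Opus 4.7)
The plan is to start from the exact evolution in Lemma~\ref{lemma:Xnu dot app} and insert first-order Taylor expansions of $f$ and $g$ around the observables associated to the relevant groups. For $i\in G_\nu$ and $j\in G_\rho$ I would write $f(x_i) = f(\cal{X}_\nu) + f'(\cal{X}_\nu)(x_i - \cal{X}_\nu) + r_i^{(f)}$ and $g(x_i,x_j) = g(\cal{X}_\nu,\cal{X}_\rho) + g_1(\cal{X}_\nu,\cal{X}_\rho)(x_i-\cal{X}_\nu) + g_2(\cal{X}_\nu,\cal{X}_\rho)(x_j-\cal{X}_\rho) + r_{ij}^{(g)}$, where $r_i^{(f)}$ and $r_{ij}^{(g)}$ are the Taylor remainders. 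Substituting into Eq.~\eqref{Xnu dot app} and using $a_{\nu k}=0$ for $k\notin G_\nu$ splits the right-hand side into the target expression $f(\cal{X}_\nu) + \sum_\rho \cal{W}_{\nu\rho}\, g(\cal{X}_\nu,\cal{X}_\rho)$, three first-order correction sums coming from the derivative terms, and a residual built from the $r_i^{(f)}$ and $r_{ij}^{(g)}$ that will ultimately be absorbed into the $\overset{{\cal O}_2}{\approx}$ class. The zeroth-order $g$ contribution already matches the target since $\sum_{i\in G_\nu, j\in G_\rho} a_{\nu i} w_{ij} = \cal{W}_{\nu\rho}$ by the definition~\eqref{def calW nu rho app}, independently of the compatibility equations.

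The heart of the proof is to show that each of the three first-order sums vanishes identically, using the normalization identity $\sum_{i\in G_\nu} a_{\nu i}(x_i - \cal{X}_\nu) = 0$ (immediate from $\cal{X}_\nu = \sum_i a_{\nu i} x_i$ and $\sum_i a_{\nu i}=1$) combined with the two compatibility equations. The $f'$-sum collapses directly to $f'(\cal{X}_\nu)\sum_{i\in G_\nu} a_{\nu i}(x_i - \cal{X}_\nu) = 0$. The $g_1$-sum equals $\sum_\rho g_1(\cal{X}_\nu,\cal{X}_\rho)\sum_{i\in G_\nu}(x_i-\cal{X}_\nu)\, a_{\nu i}\, k_i^\rho$, and Eq.~\eqref{comp eq K app} read componentwise says $a_{\nu i}\, k_i^\rho = \mu_{\nu\rho}\, a_{\nu i}$ for each $i\in G_\nu$, so the inner sum reduces to $\mu_{\nu\rho}\sum_{i\in G_\nu} a_{\nu i}(x_i-\cal{X}_\nu) = 0$. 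The $g_2$-sum equals $\sum_\rho g_2(\cal{X}_\nu,\cal{X}_\rho)\sum_{j\in G_\rho}(x_j-\cal{X}_\rho)\sum_{i\in G_\nu} a_{\nu i} w_{ij}$, and Eq.~\eqref{comp eq W app} gives $\sum_{i\in G_\nu} w_{ij}\, a_{\nu i} = \lambda_{\nu\rho}\, a_{\rho j}$ for $j\in G_\rho$, so the double sum reduces to $\lambda_{\nu\rho}\sum_{j\in G_\rho} a_{\rho j}(x_j-\cal{X}_\rho) = 0$ by the same identity applied to $\bm{a}_\rho$. Combining these three cancellations with the zeroth-order contribution yields Eq.~\eqref{Xnu dot spectral without correction app}. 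As a by-product, summing each compatibility equation over its free index and using $\sum_i \widehat{a}_{\nu i}=1$ identifies $\mu_{\nu\rho} = \lambda_{\nu\rho} = \cal{W}_{\nu\rho}$, matching Eqs.~(12a)--(12b).

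The main obstacle is really just bookkeeping: one must verify that the total Taylor remainder $\sum_i a_{\nu i} r_i^{(f)} + \sum_{i,j} a_{\nu i} w_{ij} r_{ij}^{(g)}$ can be reorganized as a sum of terms of the form ${\cal O}((x_i-\cal{X}_\nu)(x_j-\cal{X}_\rho))$ indexed over $(\nu,\rho)$ and $(i\in G_\nu,\, j\in G_\rho)$, the diagonal $\rho=\nu,\, j=i$ absorbing the pure squared contributions ${\cal O}((x_i-\cal{X}_\nu)^2)$ that come both from $f$ and from the diagonal second-order pieces of $g$. Once this identification is made in accordance with the $\overset{{\cal O}_2}{\approx}$ notation, each of the three algebraic cancellations above is a one-line computation and the proposition follows.
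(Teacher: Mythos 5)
Your proposal is correct and follows essentially the same route as the paper's proof: first-order Taylor expansion around the group observables, the normalization identity $\sum_{i\in G_\nu} a_{\nu i}(x_i-\cal{X}_\nu)=0$ to kill the $f'$ term, and the componentwise forms of the two compatibility equations to collapse the $g_1$ and $g_2$ sums to zero (the paper merely packages the same computation as identities obtained by multiplying the componentwise equations by $x_i$ or $x_j$ and summing, together with $\mu_{\nu\rho}=\lambda_{\nu\rho}=\cal{W}_{\nu\rho}$). The remainder bookkeeping you flag is handled in the paper only implicitly through the $\overset{{\cal O}_2}{\approx}$ notation, so your treatment is on the same footing.
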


\begin{proof}
By Taylor's Theorem we can approximate $f(x_i$) and $g(x_i,x_j)$ at first order around the corresponding observables as
\begin{equation}
\begin{array}{lllll}
f(x_i) & \overset{{\cal O}_2}{\approx} & f(\cal{X}_\nu) + f'(\cal{X}_\nu) (x_i - \cal{X}_\nu) && \text{ for } i \in G_\nu, \\
g(x_i,x_j) & \overset{{\cal O}_2}{\approx} & g(\cal{X}_\nu, \cal{X}_\rho) + g_1(\cal{X}_\nu, \cal{X}_\rho) (x_i - \cal{X}_\nu) + g_2(\cal{X}_\nu, \cal{X}_\rho) (x_j - \cal{X}_\rho) 
&& \text{ for } i \in G_\nu, j \in G_\rho.
\end{array}
\label{approx f, g}
\end{equation}
We rewrite Eq.~\eqref{Xnu dot app} as
\begin{equation}
\begin{array}{lll}
\cal{\dot{X}}_\nu 
&=& \displaystyle \sum \limits_{i \in G_\nu} a_{\nu i} f(x_i) + \sum \limits_{\rho=1}^n \sum \limits_{\substack{ i \in G_\nu \\  j \in G_\rho}} a_{\nu i} w_{ij} g(x_i,x_j) \\
&=& \displaystyle T_\nu + \sum \limits_{\rho = 1}^n T_{\nu \rho}, 
\end{array}
\label{Xnu dot membership}
\end{equation}
where
\begin{equation}
\begin{array}{lll}
T_{\nu} &:=& \displaystyle \sum \limits_{i \in G_\nu} a_{\nu i} f(x_i), \\
T_{\nu \rho} &:=& \displaystyle \sum \limits_{\substack{ i \in G_\nu \\ j \in G_\rho}} a_{\nu i} w_{ij} g(x_i,x_j). \\
\end{array}
\label{Xnu dot sep}
\end{equation}
Using approximation~\eqref{approx f, g}, the definition of observables and the normalization condition on $\bm{a}_\nu$, $\sum \limits_{i \in G_\nu} a_{\nu i} = 1$, we obtain the following first-order approximations:
\begin{subequations}
\begin{equation}
\begin{array}{lll}
T_{\nu} & \overset{{\cal O}_2}{\approx} & \displaystyle f(\cal{X}_\nu) + f'(\cal{X}_\nu) \sum \limits_{i \in G_\nu} a_{\nu i} (x_i - \cal{X}_\nu) \\
&=& f(\cal{X}_\nu), 
\end{array}
\label{T nu}
\end{equation}
\begin{equation}
\begin{array}{lll}
T_{\nu \rho} & \overset{{\cal O}_2}{\approx} &
\displaystyle g(\cal{X}_\nu, \cal{X}_\rho) \sum \limits_{\substack{ i \in G_\nu \\ j \in G_\rho}} a_{\nu i} w_{ij} 
+  g_1(\cal{X}_\nu, \cal{X}_\rho) \sum \limits_{\substack{ i \in G_\nu \\ j \in G_\rho}} a_{\nu i} w_{ij} (x_i-\cal{X}_\nu)
+  g_2(\cal{X}_\nu, \cal{X}_\rho) \sum \limits_{\substack{ i \in G_\nu \\ j \in G_\rho}} a_{\nu i} w_{ij} (x_j-\cal{X}_{\rho}) \\

&=& \displaystyle \cal{W}_{\nu \rho} \, g(\cal{X}_\nu, \cal{X}_{\rho})
+  g_1(\cal{X}_\nu, \cal{X}_{\rho}) \left( \sum \limits_{\substack{ i \in G_\nu \\ j \in G_\rho}} a_{\nu i} w_{ij} x_i - \cal{W}_{\nu \rho} \cal{X}_\nu \right)
+  g_2(\cal{X}_\nu, \cal{X}_{\rho}) \left( \sum \limits_{\substack{ i \in G_\nu \\ j \in G_\rho}} a_{\nu i} w_{ij} x_j - \cal{W}_{\nu \rho} \cal{X}_{\rho} \right). \\
\end{array}
\label{T nu rho}
\end{equation}
\end{subequations}
We can rewrite the compatibility equations in component form as follows:
\begin{subequations}
\begin{align}
    \left[ \bm{K}_{\nu \rho} \right]_{ii} \widehat{a}_{\nu i}
    = \mu_{\nu \rho} \widehat{a}_{\nu i} 
    && i \in \{1, \cdots, m_\nu \},
    \label{comp eq K app 2} \\
   \sum \limits_{i=1}^{m_\nu} [ \bm{W}_{\nu \rho}]_{ij} \widehat{a}_{\nu i}
    = \lambda_{\nu \rho} \widehat{a}_{\rho j}
    && j \in \{1, \cdots, m_\rho \}.
    \label{comp eq W app 2}
\end{align}
\end{subequations}
This can in turn be expressed as a function of the general interaction and degree matrices and the complete reduction vectors as
\begin{subequations}
\begin{align}
    \sum \limits_{j \in G_\rho} w_{ij} a_{\nu i}
    = \mu_{\nu \rho} a_{\nu i} 
    & &  i \in G_\nu,
    \label{comp eq K app 3} \\
   \sum \limits_{i \in G_\nu} w_{ij} a_{\nu i}
    = \lambda_{\nu \rho} a_{\rho j}
    & &  j \in G_\rho.
    \label{comp eq W app 3}
\end{align}
\end{subequations}
Given an arbitrary activity vector $\bm{x}=(x_1, \cdots, x_N)^T$, we now multiply both sides of Eq.~\eqref{comp eq K app 3} by $x_i$ and we sum over $i \in G_\nu$. We also multiply both sides of Eq.~\eqref{comp eq W app 3} by $x_j$ and we sum over $j \in G_\rho$. We get 
\begin{subequations}
\begin{align}
    \sum \limits_{\substack{ i \in G_\nu \\ j \in G_\rho}} w_{ij} a_{\nu i} x_i
    = \mu_{\nu \rho} \sum \limits_{i \in G_\nu} a_{\nu i} x_i
    & = \mu_{\nu \rho} {\cal X}_\nu
    \label{comp eq K app 4} \\
   \sum \limits_{\substack{ i \in G_\nu \\ j \in G_\rho}} w_{ij} a_{\nu i} x_j
    = \lambda_{\nu \rho} \sum \limits_{j \in G_\rho} a_{\rho j} x_j
    & = \lambda_{\nu \rho} {\cal X}_\rho.
    \label{comp eq W app 4}
\end{align}
\end{subequations}
On the other hand, we can sum Eq.~\eqref{comp eq K app 3} over $i \in G_\nu$ and Eq.~\eqref{comp eq W app 3} over $j \in G_\rho$ to get
\begin{subequations}
\begin{align}
  {\cal W}_{\nu \rho} = &
    \sum \limits_{\substack{ i \in G_\nu \\ j \in G_\rho}} w_{ij} a_{\nu i}
    = \mu_{\nu \rho} \sum \limits_{i \in G_\nu} a_{\nu i} 
    = \mu_{\nu \rho} 
    \label{mu nu rho 2} \\
   {\cal W}_{\nu \rho} = &
   \sum \limits_{\substack{ i \in G_\nu \\ j \in G_\rho}} w_{ij} a_{\nu i}
    = \lambda_{\nu \rho} \sum \limits_{j \in G_\rho} a_{\rho j}
    = \lambda_{\nu \rho} .
    \label{lambda nu rho 2}
\end{align}
\end{subequations}
Plugging Eqs.~\eqref{comp eq K app 4},~\eqref{comp eq W app 4},~\eqref{mu nu rho 2} and~\eqref{lambda nu rho 2} into Eqs.~\eqref{T nu} and~\eqref{T nu rho} we get
\begin{subequations}
\begin{equation}
\begin{array}{lll}
T_{\nu} & \overset{{\cal O}_2}{\approx} f(\cal{X}_\nu) 
\end{array}
\end{equation}
\begin{equation}
\begin{array}{lll}
T_{\nu \rho} & \overset{{\cal O}_2}{\approx} &
\displaystyle \cal{W}_{\nu \rho} \, g(\cal{X}_\nu, \cal{X}_{\rho}),
\end{array}
\end{equation}
\end{subequations}
which completes the proof.
\hfill\end{proof}

Notice that, given a set of vectors $\bm{\widehat{a}}_1, \cdots, \bm{\widehat{a}}_n$, the matrices of scalars 
$\bm{\mu}=(\mu_{\nu \rho})_{\nu,\rho}$, 
$\bm{\lambda}=(\lambda_{\nu \rho})_{\nu,\rho}$
that solve or minimize the quadratic error in the compatibility equations~\eqref{comp eq K app},~\eqref{comp eq W app}
are given by
\begin{subequations}
\begin{align}
    \mu_{\nu \rho} = \frac{ \bm{\widehat{a}}_\nu^T \bm{K}_{\nu \rho} \bm{\widehat{a}}_\nu} {\| \bm{\widehat{a}}_\nu \|^2}
    \label{mu nu rho 1} \\
    \lambda_{\nu \rho} = \frac{ \bm{\widehat{a}}_\rho^T \bm{W}_{{\nu \rho}}^T \bm{\widehat{a}}_\nu} {\| \bm{\widehat{a}}_\rho \|^2}
    \label{lambda nu rho 1}
\end{align}
\end{subequations}
(see Lemma \ref{lemma min quadratic error} below).

\begin{lemma} \label{lemma min quadratic error}
Let $\bm{M}$ be a matrix of dimension $r \times s$ and let $\bm{u}$, $\bm{v} \neq \bm{0}$ be vectors of dimension $s$ and $r$, respectively. The scalar $\lambda$ that minimizes
\begin{equation}
\Vert \bm{M} \bm{u} - \lambda \bm{v} \Vert^2
\end{equation}
is given by
\begin{equation}
\lambda = {\bm v}^+\bm{M u} ,
\label{sol lambda}
\end{equation}
where
\begin{equation}\label{eq:pseudo-inverse-vec}
     {\bm v}^+ =\frac{1}{\|\bm v\|^2}\bm{v}^T
\end{equation}
is the Moore-Penrose pseudoinverse of the vector $\bm{v}$.
\end{lemma}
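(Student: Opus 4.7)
The plan is to treat $E(\lambda) := \|\bm{M}\bm{u} - \lambda \bm{v}\|^2$ as a scalar function of the single real variable $\lambda$ and minimize it by elementary calculus. First I would expand the squared norm using the standard inner product on $\mathbb{R}^r$:
\begin{equation}
E(\lambda) = \langle \bm{M}\bm{u} - \lambda \bm{v},\, \bm{M}\bm{u} - \lambda \bm{v}\rangle
         = \|\bm{M}\bm{u}\|^2 - 2\lambda\, \bm{v}^T \bm{M}\bm{u} + \lambda^2 \|\bm{v}\|^2.
\end{equation}
This is a quadratic polynomial in $\lambda$ with leading coefficient $\|\bm{v}\|^2 > 0$ (here I use the hypothesis $\bm{v}\neq\bm{0}$), so $E$ is strictly convex and admits a unique global minimizer.

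Next I would compute the stationary condition $E'(\lambda) = -2\,\bm{v}^T \bm{M}\bm{u} + 2\lambda \|\bm{v}\|^2 = 0$, which yields
\begin{equation}
\lambda = \frac{\bm{v}^T \bm{M}\bm{u}}{\|\bm{v}\|^2}.
\end{equation}
Recognizing the prefactor as the Moore--Penrose pseudoinverse of a nonzero vector, $\bm{v}^+ = \bm{v}^T/\|\bm{v}\|^2$ (as recalled in Eq.~\eqref{eq:pseudo-inverse-vec}), this is exactly $\lambda = \bm{v}^+ \bm{M}\bm{u}$, which is the claimed formula~\eqref{sol lambda}. Strict convexity established in the previous step guarantees this critical point is indeed the minimum, completing the argument.

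There is no real obstacle: the only subtlety worth flagging is the role of the hypothesis $\bm{v}\neq\bm{0}$, which is what prevents the quadratic from degenerating to a linear (or constant) function and what makes $\bm{v}^+$ well-defined. Geometrically, the same result can be read off as the orthogonal projection of $\bm{M}\bm{u}$ onto the one-dimensional subspace spanned by $\bm{v}$, but the one-variable calculus derivation above is the most direct route.
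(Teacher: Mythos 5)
Your proof is correct and follows essentially the same route as the paper: both treat $E(\lambda)=\Vert \bm{M}\bm{u}-\lambda\bm{v}\Vert^2$ as a strictly convex quadratic in the single variable $\lambda$ (using $\bm{v}\neq\bm{0}$ for the positive leading coefficient) and solve the stationarity condition to obtain $\lambda=\bm{v}^T\bm{M}\bm{u}/\Vert\bm{v}\Vert^2=\bm{v}^+\bm{M}\bm{u}$. The only cosmetic difference is that you expand the squared norm before differentiating, whereas the paper differentiates the inner product directly.
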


\begin{proof}
Define the function $F\,:\,\mathbb{R}\to\mathbb{R}$ as
\begin{equation}
F(\lambda) = \Vert \bm{M} \bm{u} - \lambda \bm{v} \Vert^2.
\end{equation}
This function is obviously smooth. Moreover, it is convex since $F''(\lambda) = 2 \langle \bm{v}, \bm{v} \rangle > 0$. The minimum of $F$ is thus found by solving $F'(\lambda) = 0$. However, 
\begin{equation}
F'(\lambda) 
= -2 \langle \bm{v}, \bm{M} \bm{u} - \lambda \bm{v} \rangle
= -2 \left( \bm{v}^T \bm{M} \bm{u} - \lambda \bm{v}^T \bm{v} \right),
\end{equation}
which is zero whenever $\bm{v}^T \bm{M} \bm{u} = \lambda \bm{v}^T \bm{v}$.
Therefore, the minimum of $F$ is defined by 
\begin{equation}
    \lambda = \frac{ \bm{v}^T \bm{ M u} }{\bm{v}^T \bm{v}},
\end{equation}
which in turn is equivalent to Eq.~\eqref{sol lambda} since ${\bm v}^+$, the Moore-Penrose pseudoinverse of $\bm{v}$, is equal to $\bm{v}^T /( \bm{v}^T\bm{v})= \bm{v}^T /\|\bm v\|^2 $. 
\hfill\end{proof}

\section{Correction when the compatibility equations cannot be solved exactly} \label{sec: app corrected reduced system}

The compatibility equations \eqref{comp eq K app}--\eqref{comp eq W app} provide sufficient conditions for obtaining an approximate reduced system that remains valid up to second order. However, in general, these equations cannot be fulfilled simultaneously.
As detailed in the next sections, we can circumvent this problem by prioritizing  Eq.~\eqref{comp eq W app} when solving the compatibility equations. This means that we find a matrix
$\bm{\lambda}=(\lambda_{\nu \rho})_{\nu,\rho}$
and vectors $\bm{\widehat{a}}_1, \cdots, \bm{\widehat{a}}_n$
that approximately fulfill Eq.~\eqref{comp eq W app} but not necessarily Eq.~\eqref{comp eq K app}. 
Then, for all $\nu, \rho \in \{1, \cdots, n\}$, we set $\mu_{\nu \rho}$ to be the parameter that minimizes the quadratic error of Eq.~\eqref{comp eq K app}, that is,
\begin{equation}
    \mu_{\nu \rho} =  { \bm{\widehat{a}}_\nu^+ \bm{K}_{\nu \rho} \bm{\widehat{a}}_\nu}= \frac{ \bm{\widehat{a}}_\nu^T \bm{K}_{\nu \rho} \bm{\widehat{a}}_\nu} { \| \bm{\widehat{a}}_\nu \|^2}
\end{equation}
(see Lemma \ref{lemma min quadratic error}).
We can thus reasonably assume that Eqs.~\eqref{comp eq W app 4} and~\eqref{lambda nu rho 2} hold. We also assume that Eqs.~\eqref{comp eq K app 4} hold ---otherwise the observables' dynamics cannot be expressed in a closed form--- but not necessarily Eqs.~\eqref{mu nu rho 2}, so that the approximate reduced dynamics has an additional correction term:
\begin{equation}
\begin{array}{lll}
\cal{\dot{X}}_\nu 
&\approx&
\displaystyle f(\cal{X}_\nu)  + \sum \limits_{\rho = 1}^n 
\cal{W}_{\nu \rho} \, g(\cal{X}_\nu, \cal{X}_{\rho})
+ \sum \limits_{\rho = 1}^n 
g_1(\cal{X}_\nu, \cal{X}_{\rho}) \left( \mu_{\nu \rho} - \cal{W}_{\nu \rho} \right) \cal{X}_\nu
\end{array}
\label{approx dynamics + corr mu app}
\end{equation}
with $\{ \cal{W}_{\nu \rho} \}_{\nu,\rho}$ as defined in Proposition \ref{prop spectral} by Eq.~(\ref{def calW nu rho app}). Whenever Eqs.~(\ref{mu nu rho 2}) hold, ${\cal W}_{\nu \rho} = \mu_{\nu \rho}$ for all $\nu,\rho$ and we recover the reduced dynamics of Proposition \ref{prop spectral}.

\section{Equivalent forms for the compatibility equations when the adjacency matrix is positive} \label{sec: app equivalence comp eqs}

Here we show that, whenever the adjacency matrix $\bm{W}$ is positive and we want the vectors $\bm{\widehat{a}}_1, \cdots, \bm{\widehat{a}}_n$ to be also positive, we can transform the compatibility equations into another set of equivalent, decoupled equations.
We start by presenting some useful propositions.

\begin{proposition} \label{prop 1}

\emph{Let $\bm{A}$, $\bm{B}$ be two positive matrices of dimension $n \times m$ and $m \times n$, respectively. Let $\bm{u}$, $\bm{v}$ be two non-zero vectors of dimension $n$ and $m$, respectively. Then:}
\begin{enumerate}

\item \emph{There exists a scalar $\lambda > 0$ that is a dominant eigenvalue of both matrices $\bm{AB}$ and $\bm{BA}$ (that is, any other eigenvalue $\lambda'$ of $\bm{AB}$ or $\bm{BA}$ is smaller in modulus: $|\lambda'| < \lambda$). The multiplicity of $\lambda$ as an eigenvalue of both $\bm{AB}$ and $\bm{BA}$ is 1.}

\item \emph{If $\bm{u}$ and $\bm{v}$ are, respectively, eigenvectors of $\bm{AB}$ and $\bm{BA}$ associated to the dominant eigenvalue $\lambda$, then there exist scalars $\alpha, \beta \neq 0$ such that }
\begin{equation}
\begin{array}{lll}
\bm{A v} &=& \alpha \bm{u} \\
\bm{B u} &=& \beta \bm{v}.
\end{array}
\end{equation}
\end{enumerate}

\end{proposition}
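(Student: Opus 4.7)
The plan is to invoke Perron--Frobenius together with the standard fact that the products $\bm{AB}$ and $\bm{BA}$ share the same nonzero spectrum. First I would observe that since $\bm A$ and $\bm B$ are entrywise positive, both $\bm{AB}$ (of size $n \times n$) and $\bm{BA}$ (of size $m \times m$) are also positive matrices. The Perron--Frobenius theorem for positive matrices then gives each of them a strictly positive, simple, strictly dominant real eigenvalue with an associated entrywise positive eigenvector. Call these eigenvalues $\lambda_{AB}$ and $\lambda_{BA}$.

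Next I would show $\lambda_{AB} = \lambda_{BA}$ using the classical identity that for any rectangular $\bm A$, $\bm B$ of compatible sizes, $\bm{AB}$ and $\bm{BA}$ have the same nonzero eigenvalues with the same algebraic multiplicities: indeed, if $(\bm{AB})\bm w = \mu \bm w$ with $\mu \neq 0$ and $\bm w \neq \bm 0$, then $\bm{Bw}$ is nonzero (else $\mu \bm w = \bm{ABw} = \bm 0$) and $(\bm{BA})(\bm{Bw}) = \mu(\bm{Bw})$. Applying this both ways forces $\lambda_{AB} = \lambda_{BA} =: \lambda > 0$, and the simple, strictly dominant status on either side transfers. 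This establishes part~1.

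For part~2, let $\bm u$ satisfy $(\bm{AB})\bm u = \lambda \bm u$ and $\bm v$ satisfy $(\bm{BA})\bm v = \lambda \bm v$. By simplicity of $\lambda$ as an eigenvalue of $\bm{AB}$, the eigenvector $\bm u$ is a nonzero scalar multiple of the positive Perron vector, so its entries are either all positive or all negative; in particular $\bm u$ has no zero entries. Since $\bm B$ is entrywise positive, $\bm{Bu}$ is therefore an entrywise nonzero (constant-sign) vector, and the computation $(\bm{BA})(\bm{Bu}) = \bm B(\bm{AB})\bm u = \lambda(\bm{Bu})$ shows it is an eigenvector of $\bm{BA}$ for $\lambda$. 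Simplicity of $\lambda$ on the $\bm{BA}$ side forces $\bm{Bu} = \beta \bm v$ for some scalar $\beta$, and $\beta \neq 0$ because $\bm{Bu} \neq \bm 0$. Interchanging the roles of $\bm A, \bm B$ and $\bm u, \bm v$ yields $\bm{Av} = \alpha \bm u$ with $\alpha \neq 0$ by the same argument.

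I expect the main subtlety to be the step that $\bm{Bu} \neq \bm 0$: this relies both on the Perron--Frobenius simplicity (to conclude that $\bm u$ has no zero entries once one sign-normalizes it against the positive Perron vector) and on the strict positivity of $\bm B$. Everything else is either Perron--Frobenius applied directly or the elementary eigenvalue-transfer identity between $\bm{AB}$ and $\bm{BA}$, both of which are standard and do not require explicit calculation beyond what is written above.
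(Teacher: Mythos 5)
Your proof is correct and follows essentially the same route as the paper's: Perron--Frobenius applied to the positive matrices $\bm{AB}$ and $\bm{BA}$, the standard eigenvalue-transfer argument via left-multiplication by $\bm{B}$ (resp.\ $\bm{A}$), and simplicity of the dominant eigenvalue to force $\bm{Bu}$ and $\bm{Av}$ to be scalar multiples of $\bm{v}$ and $\bm{u}$. The only cosmetic difference is your justification that $\bm{Bu}\neq\bm{0}$ in part~2 via entrywise positivity of the Perron vector; the paper gets the same conclusion more directly from $\bm{Bu}=\bm{0}\Rightarrow\lambda\bm{u}=\bm{ABu}=\bm{0}$, a contradiction.
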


\begin{proof}\hfill

\begin{enumerate}

\item $\bm{AB}$ and $\bm{BA}$ are positive matrices. Then, the Perron-Frobenius Theorem states that they have dominant eigenvalues $\lambda$ and $\lambda'$, respectively, that are positive and have multiplicity 1. We only need to prove that $\lambda = \lambda'$. Let $\bm{u}$ be an eigenvector of $\bm{AB}$ associated to $\lambda$:
$$
\bm{ABu} = \lambda \bm{u}.
$$
Left-multiplying this equation by $\bm{B}$ we have
$$
\bm{BA(Bu)} = \lambda \bm{Bu}.
$$
$\bm{Bu} \neq \bm{0}$ because otherwise we would have $\bm{0} = \bm{ABu} = \lambda \bm{u}$ and this is absurd since $\lambda$ and $\bm{u}$ are non-zero. Therefore, $\bm{Bu}$ is an eigenvector of $\bm{BA}$ with eigenvalue $\lambda$, which implies $|\lambda| \leq \lambda'$. Inverting the roles of $\bm{A}$ and $\bm{B}$ we obtain $\lambda' \leq \lambda$. We conclude that $\lambda = \lambda'$.

\item Let $\bm{u}$ and $\bm{v}$ be, respectively, eigenvectors of $\bm{AB}$ and $\bm{BA}$ associated to the dominant eigenvalue $\lambda$. In the proof of point 1. we have shown that $\bm{Bu}$ and $\bm{Av}$ are eigenvectors of $\bm{B A}$ and $\bm{A B}$, respectively, with eigenvalue $\lambda$. We also know that $\lambda$ has multiplicity 1 in both cases, which means that the eigenspaces associated to $\lambda$ for $\bm{AB}$ and $\bm{BA}$ have dimension 1. Therefore, $\bm{Bu}$ and $\bm{Av}$ have to be multiples of $\bm{v}$ and $\bm{u}$, respectively: there exist scalars $\alpha, \beta \neq 0$ such that
$$
\begin{array}{lll}
\bm{A v} &=& \alpha \bm{u} \\
\bm{B u} &=& \beta \bm{v}.
\end{array}
$$
\end{enumerate} 

\hfill\end{proof}

\begin{proposition}  \label{prop 2}

\emph{Let $\bm{A}$, $\bm{B}$ be two positive matrices of dimension $n \times m$ and $m \times n$, respectively. Let $\bm{u}$, $\bm{v}$ be two vectors of dimension $n$ and $m$, respectively, such that all their entries are positive. Then, the following statements are equivalent:}
\begin{enumerate}

\item \emph{There exist scalars $\alpha, \beta > 0$ such that }
\begin{equation}
\begin{array}{lll}
\bm{A v} &=& \alpha \bm{u} \\
\bm{B u} &=& \beta \bm{v}.
\end{array}
\label{result 2.2}
\end{equation}

\item \emph{ There exists a scalar $\lambda > 0$ such that }
\begin{equation}
\begin{array}{lll}
\bm{A B u} &=& \lambda \bm{u} \\
\bm{B A v} &=& \lambda \bm{v}.
\end{array}
\end{equation}

\end{enumerate}

\end{proposition}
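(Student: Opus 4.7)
The plan is to establish the two implications separately, leaning on Proposition~\ref{prop 1} for the nontrivial direction.

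For (1)~$\Rightarrow$~(2), the argument is a direct substitution. Given $\bm{Av} = \alpha \bm{u}$ and $\bm{Bu} = \beta \bm{v}$ with $\alpha, \beta > 0$, I would compute $\bm{ABu} = \bm{A}(\beta \bm{v}) = \alpha\beta\,\bm{u}$ and $\bm{BAv} = \bm{B}(\alpha \bm{u}) = \alpha\beta\,\bm{v}$, then set $\lambda := \alpha\beta$, which is positive as the product of two positive scalars.

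For (2)~$\Rightarrow$~(1), the key observation is that the hypotheses force $\lambda$ to be the Perron--Frobenius eigenvalue of both $\bm{AB}$ and $\bm{BA}$. Since $\bm{A}$ and $\bm{B}$ are positive matrices, so are the products $\bm{AB}$ and $\bm{BA}$. By Perron--Frobenius each of these products has a unique dominant eigenvalue, whose eigenspace is one-dimensional and spanned by a strictly positive vector; hence any strictly positive eigenvector must lie in that Perron eigenspace. Applied to $\bm{u}$ (resp.\ $\bm{v}$), this identifies $\lambda$ as the dominant eigenvalue of $\bm{AB}$ (resp.\ $\bm{BA}$). This is consistent with the first part of Proposition~\ref{prop 1}, which asserts that the dominant eigenvalues of $\bm{AB}$ and $\bm{BA}$ coincide. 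The second part of Proposition~\ref{prop 1} then supplies nonzero scalars $\alpha, \beta$ with $\bm{Av} = \alpha\bm{u}$ and $\bm{Bu} = \beta\bm{v}$. To upgrade these to positive scalars, I would observe that $\bm{A}$ positive and $\bm{v}$ strictly positive yield strictly positive entries in $\bm{Av}$; combining this with the strict positivity of $\bm{u}$ in the equation $\bm{Av} = \alpha\bm{u}$ forces $\alpha > 0$, and the analogous argument applied to $\bm{Bu} = \beta\bm{v}$ yields $\beta > 0$.

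The main obstacle---more a matter of care than real difficulty---is the initial step of the reverse direction: ruling out the possibility that $\bm{u}$ or $\bm{v}$ could belong to a non-dominant eigenspace. Once Perron--Frobenius has been invoked to pin $\lambda$ down as the dominant eigenvalue of both products, the remainder of the argument is a mechanical application of Proposition~\ref{prop 1} together with elementary sign-tracking on the components of $\bm{Av}$ and $\bm{Bu}$.
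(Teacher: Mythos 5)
Your proposal is correct and follows essentially the same route as the paper: direct substitution for the forward implication, and for the converse the Perron--Frobenius identification of $\lambda$ as the common dominant eigenvalue followed by an appeal to Proposition~\ref{prop 1} and sign-tracking to get $\alpha,\beta>0$. Your sign argument (using positivity of $\bm{A}$ and $\bm{v}$ directly) is, if anything, slightly cleaner than the paper's phrasing of that final step.
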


\begin{proof}

We start by showing that 1. implies 2. If we left-multiply the first equality in Eq.~\eqref{result 2.2} by $\bm{B}$ and then use the second one, we get 
\begin{equation}
\begin{array}{lllll}
\bm{B A v} &=& \alpha \bm{B u} &=& \alpha \beta \bm{v}.
\end{array}
\end{equation}
Analogously,
\begin{equation}
\begin{array}{lllll}
\bm{A B u} &=& \beta \bm{A v} &=& \alpha \beta \bm{u}.
\end{array}
\end{equation}
Therefore, 2. holds with $\lambda = \alpha \beta > 0$.

Let us show now that 2. implies 1. From 2. we have that $\bm{u}$ and $\bm{v}$ are, respectively, eigenvectors of $\bm{AB}$ and $\bm{BA}$ with eigenvalue $\lambda$. Since all the components of $\bm{u}$ and $\bm{v}$ are positive and $\bm{AB}$, $\bm{BA}$ are positive matrices, by the Perron-Frobenius Theorem they must be dominant eigenvectors of $\bm{AB}$ and $\bm{BA}$. This means that $\lambda$ is the dominant eigenvalue of both matrices. Thus, according to Proposition~\ref{prop 1}, there exist scalars $\alpha, \beta \neq 0$ such that Eq.~\eqref{result 2.2} holds. Moreover, $\alpha, \beta$ are positive because  $\bm{AB}, \bm{BA}$ are positive matrices and $\bm{u,v}$ have positive entries.\hfill\end{proof}

Notice that going from 1. to 2. is straightforward and does not require the positiveness hypothesis on $\bm{A}$, $\bm{B}$, $\bm{u}$ and $\bm{v}$. These conditions are nonetheless needed to deduce 1. from 2.
The following corollary is a direct application of Proposition \ref{prop 2} to the compatibility equations.

\begin{corollary} \label{corollary 1}

For $\nu, \rho \in \{1, \cdots, n\}$, let $\bm{W}_{\nu \rho}$ be a positive matrix of dimension $m_\nu \times m_\rho$ and let $\bm{\widehat{a}}_\nu$ be a positive vector of dimension $m_\nu$. Then, the following statements are equivalent:
\begin{enumerate}

\item There exists a set of positive scalars $\{ \lambda_{\nu \rho} \}_{\nu,\rho}$ such that
\begin{equation}
\begin{array}{lllll}
\bm{W}_{\nu \nu}^T \bm{\widehat{a}}_\nu &=& \lambda_{\nu \nu} \bm{\widehat{a}}_\nu, & & \nu \in \{1, \cdots, n\} \\
\bm{W}_{\nu \rho}^T \bm{\widehat{a}}_\nu &=& \lambda_{\nu \rho} \bm{\widehat{a}}_\rho, & & \nu, \rho \in \{1, \cdots, n\}, \, \nu \neq \rho.
\end{array}
\label{comp eqs 1 app}
\end{equation}

\item There exists a set of positive scalars $\{ \lambda'_{\nu \rho} \}_{\nu,\rho}$ such that 
\begin{equation}
\begin{array}{lllll}
\bm{W}_{\nu \nu}^T \bm{\widehat{a}}_\nu &=& \lambda'_{\nu \nu} \bm{\widehat{a}}_\nu, & & \nu \in \{1, \cdots, n\} \\
\bm{W}_{\rho \nu}^T \bm{W}_{\nu \rho}^T \bm{\widehat{a}}_\nu &=& \lambda_{\nu \rho}' \bm{\widehat{a}}_\nu, & & \nu, \rho \in \{1, \cdots, n\}, \, \nu \neq \rho.
\end{array}
\label{comp eqs 2 app}
\end{equation}

\end{enumerate}

Moreover, the relation between the scalars of statements 1. and 2. is
\begin{equation}
\begin{aligned}
\lambda_{\nu \nu}' &= \lambda_{\nu \nu}  & \\
\lambda_{\nu \rho}' &= \lambda_{\nu \rho} \lambda_{\rho \nu} &\, \text{ for } \nu \neq \rho.
\end{aligned}
\end{equation}
This allows us to transform the original compatibility equations, Eqs.~ \eqref{comp eq K app}--\eqref{comp eq W app}, into the \emph{decoupled compatibility equations}
\begin{subequations}
\begin{align}
    \bm{K}_{\nu \rho} \bm{\widehat{a}}_\nu = \mu_{\nu \rho} \bm{\widehat{a}}_\nu
    \label{comp eq K decoupled app} \\
    \bm{W'}_{{\nu \rho}} \bm{\widehat{a}}_\nu = \lambda_{\nu \rho}' \bm{\widehat{a}}_\nu,
    \label{comp eq W decoupled app}
\end{align}
\end{subequations}
where
\begin{equation}
\begin{array}{lll}
\lambda_{\nu \rho}' :=\left\lbrace
\begin{array}{lll}
\lambda_{\nu \nu} && \text{ if } \nu = \rho \\
\lambda_{\nu \rho} \lambda_{\rho \nu} && \text{ if } \nu \neq \rho\\
\end{array}
\right. ,

&&

\bm{W'}_{\nu \rho} := \left\lbrace
\begin{array}{lll}
\bm{W}_{\nu \nu}^T && \text{ if } \nu = \rho \\
\bm{W}_{\rho \nu}^T \bm{W}_{\nu \rho}^T && \text{ if } \nu \neq \rho \\
\end{array}
\right. .
\end{array}
\end{equation}

\end{corollary}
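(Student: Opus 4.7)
The plan is to derive the corollary as a direct application of Proposition~\ref{prop 2} to each off-diagonal pair of indices. The diagonal equations $\bm{W}_{\nu\nu}^T \bm{\widehat{a}}_\nu = \lambda_{\nu\nu} \bm{\widehat{a}}_\nu$ appear identically in Statements~1 and~2, so they require no argument and simply force $\lambda'_{\nu\nu} = \lambda_{\nu\nu}$.

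For a fixed pair $\nu \neq \rho$, I would invoke Proposition~\ref{prop 2} with the identifications $\bm{A} := \bm{W}_{\rho\nu}^T$ (of dimension $m_\nu \times m_\rho$), $\bm{B} := \bm{W}_{\nu\rho}^T$ (of dimension $m_\rho \times m_\nu$), $\bm{u} := \bm{\widehat{a}}_\nu$, and $\bm{v} := \bm{\widehat{a}}_\rho$. With this choice, the relations $\bm{B}\bm{u} = \beta \bm{v}$ and $\bm{A}\bm{v} = \alpha \bm{u}$ of Proposition~\ref{prop 2} match exactly the off-diagonal equations of Statement~1 with $\beta = \lambda_{\nu\rho}$ and $\alpha = \lambda_{\rho\nu}$. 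Similarly, $\bm{A}\bm{B}\bm{u} = \lambda \bm{u}$ becomes $\bm{W}_{\rho\nu}^T \bm{W}_{\nu\rho}^T \bm{\widehat{a}}_\nu = \lambda\,\bm{\widehat{a}}_\nu$, which is the off-diagonal equation of Statement~2 with $\lambda = \lambda'_{\nu\rho}$, while $\bm{B}\bm{A}\bm{v} = \lambda \bm{v}$ yields the corresponding equation for the pair $(\rho,\nu)$.

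Verifying the hypotheses of Proposition~\ref{prop 2} is immediate: positivity of $\bm{W}_{\nu\rho}$ and $\bm{W}_{\rho\nu}$ carries over to $\bm{A}$ and $\bm{B}$, and the vectors $\bm{\widehat{a}}_\nu$, $\bm{\widehat{a}}_\rho$ are positive by assumption. The equivalence (1.~$\Leftrightarrow$~2.) of Proposition~\ref{prop 2} then gives the desired equivalence at the level of the single unordered pair $\{\nu,\rho\}$; gathering these over all pairs yields the corollary. The explicit identity $\lambda'_{\nu\rho} = \lambda_{\nu\rho}\lambda_{\rho\nu}$ is read off from the proof of Proposition~\ref{prop 2}, where the shared eigenvalue equals $\alpha\beta$, and positivity of the $\lambda'_{\nu\rho}$ follows from positivity of the $\lambda_{\nu\rho}$ (or, in the other direction, from Perron--Frobenius applied to $\bm{A}\bm{B}$).

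I do not anticipate any real obstacle, since all the genuinely nontrivial work (the Perron--Frobenius-based matching of dominant eigenspaces) has already been isolated in Propositions~\ref{prop 1} and~\ref{prop 2}. The only point that merits a sentence of verification is internal consistency: applying Proposition~\ref{prop 2} to the ordered pair $(\nu,\rho)$ and to $(\rho,\nu)$ must return the \emph{same} shared eigenvalue, and this is automatic from $\alpha\beta = \beta\alpha$ together with the uniqueness of the Perron eigenvalue of $\bm{A}\bm{B}$ and $\bm{B}\bm{A}$. This symmetry is precisely what makes the definition $\lambda'_{\nu\rho} := \lambda_{\nu\rho}\lambda_{\rho\nu}$ well posed and ensures that the pairwise equivalences assemble without conflict into the global statement.
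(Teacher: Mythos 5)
Your proposal is correct and follows essentially the same route as the paper, which presents the corollary as a direct application of Proposition~\ref{prop 2} with exactly the identifications you make ($\bm{A} = \bm{W}_{\rho\nu}^T$, $\bm{B} = \bm{W}_{\nu\rho}^T$, $\bm{u} = \bm{\widehat{a}}_\nu$, $\bm{v} = \bm{\widehat{a}}_\rho$). Your closing remark on the consistency of the shared eigenvalue across the ordered pairs $(\nu,\rho)$ and $(\rho,\nu)$ is a worthwhile point that the paper leaves implicit, and your justification of it (via $\alpha\beta=\beta\alpha$ in one direction and the Perron--Frobenius uniqueness of the common dominant eigenvalue of $\bm{AB}$ and $\bm{BA}$, i.e.\ Proposition~\ref{prop 1}, in the other) is sound.
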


Once Eqs.~\eqref{comp eq K decoupled app}--\eqref{comp eq W decoupled app} are solved and $\bm{\widehat{a}}_\nu$ is known for all $\nu$, the original set of compatibility equations \eqref{comp eq K app}--\eqref{comp eq W app} is fulfilled and the set of scalars $\{ \mu_{\nu \rho}, \lambda_{\nu \rho} \}_{\nu,\rho}$ can be determined via Eqs.~\eqref{mu nu rho 2}--\eqref{lambda nu rho 2} or \eqref{mu nu rho 1}--\eqref{lambda nu rho 1}.

As we have noticed after the proof of the second result, the positiveness of $\bm{W}_{\rho \nu}^T$ and $\bm{\widehat{a}}_\nu$ for all $\nu, \rho$ is needed to obtain the equivalence between Eqs.~\eqref{comp eqs 1 app} and Eqs.~\eqref{comp eqs 2 app}. If we relax this hypothesis, solving the decoupled equations might not be sufficient for solving the original set of equations. 

The decoupled compatibility equations \eqref{comp eq K decoupled app}--\eqref{comp eq W decoupled app} are not simultaneously solvable in general. In the next sections we present a possible strategy to find an approximate solution.

\section{An approximate solution to the compatibility equations that involve the adjacency matrix} \label{sec: app solution comp eqs W}

Let us focus on approximately solving Eqs.~\eqref{comp eq W decoupled app} for a fixed $\nu$ and variable $\rho$. For this, we assume that the scalars $ \lambda'_{\nu 1}, \cdots, \lambda'_{\nu n}$ are the dominant eigenvalues of the matrices involved (this would be the case if Eqs.~\eqref{comp eq W decoupled app} could be solved \emph{exactly}). Our goal then is to find a vector that has sum 1 and minimizes the sum of the corresponding quadratic errors. For now we will relax the condition of the vector having positive entries. We can formulate the problem as follows:

\begin{problem} \label{problem}

\emph{Given a set of $m \times m$ positive matrices $\{\bm{M}_i\}_{i=1}^n$ and scalars $\{\lambda_i\}_{i=1}^n$, find a vector $\bm{a}=(a_i)_{i=1}^m \in \mathbb{R}^m$ with $\sum \limits_{i=1}^m a_i = 1$ and such that the following error is minimal:}
\begin{equation}
E( \bm{a} ) := \Vert \bm{M}_1 \bm{a} - \lambda_1 \bm{a} \Vert^2 + \cdots +  \Vert \bm{M}_n \bm{a} - \lambda_n \bm{a} \Vert^2.
\end{equation}

\end{problem}

\begin{proposition} \label{prop 3}

Let $\bm{u}_1, \cdots, \bm{u}_r$ be $r$ linearly independent, non-negative vectors in $\mathbb{R}^m$. Without loss of generality we can assume that these vectors are normalized: $\sum \limits_{i=1}^m [\bm{u}_s]_i = 1$ for all $s \in \{1, \cdots, r\}$. Then, a solution $\bm{a} \in \mathbb{R}^m$ to Problem \ref{problem} 
of the form
\begin{equation}
\bm{a} ( \bm{x} ) := x_1 \bm{u}_1 + \cdots + x_r \bm{u}_r
\label{a(x) app}
\end{equation}
must satisfy the following condition: the vector $\bm{y} := (x_1, \cdots, x_r, K )^T$, for some non-zero constant $K$, is a solution to the system of $r+1$ linear equations
\begin{equation}
\begin{array}{lll}
\bm{\hat{C} y} &=&  ( 0, \cdots, 0, 1 )^T
\end{array}
\label{eq Cy}
\end{equation}
where
$
\bm{\hat{C}} = \left(
\begin{array}{c | c} 
\bm{C} & - \bm{1} \\
\hline
\bm{1}^T & 0
\end{array}
\right)
$
and 
$\bm{C} = (c_{s t})_{s,t}$ is the $r \times r$ matrix defined by
\begin{equation}
c_{st} := \sum \limits_{j=1}^n \langle \bm{M}_j \bm{u}_s - \lambda_j \bm{u}_s, \bm{M}_j \bm{u}_t - \lambda_j \bm{u}_t \rangle.
\end{equation}

\end{proposition}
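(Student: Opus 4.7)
The plan is to recast Problem~\ref{problem} restricted to the subspace $\mathrm{span}(\bm{u}_1,\ldots,\bm{u}_r)$ as a constrained quadratic minimization in the coordinate vector $\bm{x}=(x_1,\ldots,x_r)^T$, and then to write down the first-order optimality conditions in block-matrix form. First I would substitute $\bm{a}(\bm{x}) = x_1\bm{u}_1+\cdots+x_r\bm{u}_r$ into the error functional. Using bilinearity of the inner product, the objective takes the form
\begin{equation*}
E(\bm{a}(\bm{x})) = \sum_{j=1}^n \Big\Vert \sum_{s=1}^r x_s (\bm{M}_j\bm{u}_s-\lambda_j\bm{u}_s)\Big\Vert^2 = \sum_{s,t=1}^r x_s x_t \sum_{j=1}^n\langle \bm{M}_j\bm{u}_s-\lambda_j\bm{u}_s,\bm{M}_j\bm{u}_t-\lambda_j\bm{u}_t\rangle = \bm{x}^T\bm{C}\bm{x},
\end{equation*}
with $\bm{C}=(c_{st})_{s,t}$ exactly as defined in the statement. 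By construction $\bm{C}$ is symmetric and positive semi-definite.

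Next I would translate the affine normalization constraint $\sum_{i=1}^m a_i = 1$ into a constraint on $\bm{x}$. Because each vector $\bm{u}_s$ has been normalized so that $\sum_i [\bm{u}_s]_i = 1$, one has $\sum_i [\bm{a}(\bm{x})]_i = \sum_s x_s \sum_i [\bm{u}_s]_i = \bm{1}^T\bm{x}$, so the constraint becomes the single linear equation $\bm{1}^T\bm{x}=1$. The restricted problem is therefore to minimize the convex quadratic form $\bm{x}^T\bm{C}\bm{x}$ on the affine hyperplane $\{\bm{x}\in\mathbb{R}^r : \bm{1}^T\bm{x}=1\}$.

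I would then apply the method of Lagrange multipliers. Introducing a multiplier $K$ and the Lagrangian $L(\bm{x},K) = \bm{x}^T\bm{C}\bm{x} - 2K(\bm{1}^T\bm{x}-1)$, the stationarity conditions $\nabla_{\bm{x}}L=\bm{0}$ and $\partial_K L = 0$ read $\bm{C}\bm{x}-K\bm{1}=\bm{0}$ and $\bm{1}^T\bm{x}=1$. Stacking these two blocks into one linear system yields
\begin{equation*}
\begin{pmatrix}\bm{C} & -\bm{1}\\ \bm{1}^T & 0\end{pmatrix}\begin{pmatrix}\bm{x}\\ K\end{pmatrix} = \begin{pmatrix}\bm{0}\\ 1\end{pmatrix},
\end{equation*}
which is exactly Eq.~\eqref{eq Cy} with $\bm{y}=(x_1,\ldots,x_r,K)^T$. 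Finally, because $\bm{C}$ is positive semi-definite and the feasible set is non-empty, every stationary point of the Lagrangian on the constraint set is a global minimizer, so any solution of the form \eqref{a(x) app} with $\bm{x}$ extracted from a solution $\bm{y}$ of \eqref{eq Cy} solves Problem~\ref{problem} restricted to $\mathrm{span}(\bm{u}_1,\ldots,\bm{u}_r)$.

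The main subtlety I anticipate is not the differentiation itself but ensuring that the system \eqref{eq Cy} genuinely captures the minimizer: one must observe that $\bm{C}$ being only positive semi-definite (it may be singular when the vectors $\{\bm{M}_j\bm{u}_s-\lambda_j\bm{u}_s\}_s$ are linearly dependent) makes the solution in $\bm{x}$ possibly non-unique, but convexity guarantees that all solutions of \eqref{eq Cy} yield the same minimal value of $E$. This accords with the remark following the statement that multiple $\bm{\widehat{a}}_\nu$ may exist with the same associated error.
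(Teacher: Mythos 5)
Your proposal is correct and follows essentially the same route as the paper: substituting $\bm{a}(\bm{x})$ into the error to obtain the quadratic form $\bm{x}^T\bm{C}\bm{x}$, reducing the normalization to $\bm{1}^T\bm{x}=1$, and deriving the block linear system from the Lagrange stationarity conditions. Your closing remark on convexity and the non-uniqueness of $\bm{x}$ matches the paper's discussion immediately after the proposition.
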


\begin{proof}

We are looking for a solution $\bm{a}$ within the subspace spanned by $\bm{u}_1, \cdots, \bm{u}_r$. The vector $\bm{a}$ can thus be expressed as
\begin{equation}
\bm{a}( \bm{x} ) = x_1 \bm{u}_1 + \cdots + x_r \bm{u}_r
\end{equation}
for some $\bm{x} = (x_1, \cdots, x_r)^T$. Since  $\sum \limits_{i=1}^m [\bm{u}_s]_i = 1$ by assumption, the condition $\sum \limits_{i=1}^m a_i = 1$ requires $\sum \limits_{i=1}^r x_i = 1$.
We thus have to find $\bm{x}$ such that $\bm{a}(\bm{x})$ minimizes $E( \bm{a}(\bm{x}) )$ subject to the constraint
\begin{equation}
\sum \limits_{i=1}^r x_i = 1. 
\end{equation}

This is a minimization problem with a single constraint that can be solved using the method of the Lagrange multipliers: considering the Lagrangian function
\begin{equation}
\cal{L} ( \bm{x}, K ) := \sum \limits_{j=1}^n \Vert \bm{M}_j \bm{a}(\bm{x}) - \lambda_j \bm{a}(\bm{x}) \Vert^2
+ 2 K \left(1 - \sum \limits_{s=1}^r x_s \right)
= E( \bm{a}(\bm{x}) ) + 2 K \left(1 - \sum \limits_{s=1}^r x_s \right),
\end{equation}
a local solution to the problem necessarily fulfills
\begin{equation}
\begin{array}{lll}
\displaystyle \frac{\partial}{\partial x_i} \cal{L}( \bm{x}, K ) = 0 \text{\hspace{0,3cm}} \forall i, &&
\displaystyle \frac{\partial}{\partial K} \cal{L}( \bm{x}, K ) = 0. \\
\end{array}
\label{Lagrange equations}
\end{equation}
Let us notice that, for a given $j \in \{1, \cdots, n \}$,
\begin{equation}
\begin{array}{lll}
\Vert \bm{M}_j \bm{a} (\bm{x}) - \lambda_j \bm{a} (\bm{x}) \Vert^2 
&=& \displaystyle \Vert \sum \limits_{s = 1}^r x_s \left( \bm{M}_j \bm{u}_s - \lambda_j \bm{u}_s \right) \Vert^2 \\
&=& \displaystyle \langle \sum \limits_{s = 1}^r x_s \left( \bm{M}_j \bm{u}_s - \lambda_j \bm{u}_s \right),
 \sum \limits_{t=1}^r x_t \left( \bm{M}_j \bm{u}_t - \lambda_j \bm{u}_t \right) \rangle \\

&=& \displaystyle
\sum \limits_{s,t=1}^r x_s x_t \,
\langle \bm{M}_j \bm{u}_s - \lambda_j \bm{u}_s, \bm{ M}_j \bm{u}_t - \lambda_j \bm{u}_t \rangle \\
&=& \displaystyle
\sum \limits_{s,t=1}^r x_s x_t \, c_{st}^j
 \\
 &=& \langle \bm{x}, \bm{C}_j \bm{x} \rangle,
\end{array}
\end{equation}
where $\langle \cdot, \cdot \rangle$ denotes the scalar product and we have defined
\begin{equation}
\begin{array}{lll}
\bm{C}_j := (c_{st}^j)_{s,t}, &&
c_{st}^j := \langle \bm{M}_j \bm{u}_s - \lambda_j \bm{u}_s, \bm{M}_j \bm{u}_t - \lambda_j \bm{u}_t \rangle.
\end{array}
\end{equation}
Due to the symmetry of the scalar product, $\bm{C}_j$ is a symmetric matrix for all $j$. Introducing the matrix $\bm{C} :=  \sum \limits_{j=1}^n \bm{C}_j$ we have
\begin{equation}
\begin{array}{lll}
\cal{L}( \bm{x}, K ) &=& \displaystyle \sum \limits_{j=1}^n \langle \bm{x}, \bm{C}_j \bm{x} \rangle
+ 2 K \left( 1 - \sum_{s=1}^r x_s \right) \\
&=&\displaystyle \langle \bm{x}, \bm{C x} \rangle
+ 2 K \left( 1 - \sum_{s=1}^r x_s \right)
\end{array}
\end{equation}
and
\begin{equation}
\begin{array}{lll}
\displaystyle \frac{\partial}{\partial x_i} \cal{L}( \bm{x}, K ) 
&=& \displaystyle \langle \frac{\partial}{\partial x_i} \bm{x}, \bm{C x} \rangle
+ \langle \bm{x}, \bm{C} \frac{\partial}{\partial x_i} \bm{x} \rangle - 2 K \\
&=& \displaystyle 
2 \langle \frac{\partial}{\partial x_i} \bm{x}, \bm{C x} \rangle - 2 K \\
&=& \displaystyle 2 \left( [\bm{C x}]_i - K \right),
\end{array}
\end{equation}
where in the second equality we have taken into account that $\bm{C}$ is symmetric.
We can finally express the set of equations~\eqref{Lagrange equations} as
\begin{equation}
\begin{array}{lll}
\bm{C x} &=& \displaystyle K \bm{1} \\
\displaystyle \sum \limits_{s=1}^r x_s &=& 1, \\
\end{array}
\label{system alpha}
\end{equation}
where $\bm{1} = (1, \cdots, 1)^T$.
This is a system of $r+1$ linear equations that can in turn be rewritten as
\begin{equation}
\begin{array}{lll}
\bm{\hat{C} y} &=&  ( 0, \cdots, 0, 1 )^T
\end{array}
\end{equation}
with
$\bm{\hat{C}} := \left(
\begin{array}{c | c} 
\bm{C} & - \bm{1} \\
\hline
\bm{1}^T & 0
\end{array}
\right)
$
and $\bm{y} := (x_1, \cdots, x_r, K )$.\hfill\end{proof}

We observe the following:

\begin{itemize}

\item \emph{The error associated to a solution $(\bm{x}, K)$ is given by $K$.}
If $(\bm{x}, K)$ is a solution to Eq.~\eqref{system alpha}, then the error $E( \bm{a} (\bm{x}) )$ and the Lagrangian function $\cal{L}(\bm{x}, K)$ take the same value and this is
\begin{equation}
\begin{array}{lll}
E( \bm{a} (\bm{x}) ) 
&=& \langle \bm{x}, \bm{C x} \rangle \\ 
&=& \langle \bm{x}, K \bm{1} \rangle \\ 
&=& K. \\ 
\end{array}
\end{equation}

\item \emph{All the solutions to Eq.~\eqref{system alpha} produce the same error.} Suppose, on the contrary, that there are two solutions, $s_1 = (\bm{x}_1, K_1), s_2 = (\bm{x}_2, K_2)$ that produce different errors, that is, with $K_1 \neq K_2$. Any point in the segment that connects $s_1$ and $s_2$, $s(h) = (1-h) s_1 + h s_2$, $h \in (0,1)$, is also a solution and its corresponding Lagrangian function is $\cal{L}(s(h)) = (1-h) K_1 + h K_2$. Since $K_1 \neq K_2$, this means that the directional derivative of the Lagrangian function along this segment is non-zero for all $h$, which contradicts the fact that the solutions to Eq.~\eqref{system alpha} are the points for which all the directional derivatives of the Lagrangian function are zero. We conclude that $K_1 = K_2$.

\end{itemize}

We can apply this result to our set of decoupled compatibility equations that involve the adjacency matrix. If we do not impose that $\bm{\widehat{a}}_\nu$ has positive entries and we assume that it lies in the subspace spanned by a collection of $r$ vectors $\bm{u}_1, \cdots, \bm{u}_r \in \mathbb{R}^{m_\nu}$ with $\sum \limits_{i=1}^{m_\nu} [\bm{u}_s]_i = 1$ for all $s$, then
\begin{equation}
\bm{\widehat{a}}_\nu = x_1 \bm{u}_1 + \cdots + x_r \bm{u}_r
\end{equation}
and the $x_1, \cdots, x_r$ parameters are obtained by solving Eq.~\eqref{eq Cy} with matrix $\bm{C}$ defined by
\begin{equation}
c_{st} := \sum \limits_{\rho=1}^n \langle \bm{W'}_{\nu \rho} \bm{u}_s - \lambda'_{\nu \rho} \bm{u}_s,  \bm{W'}_{\nu \rho} \bm{u}_t - \lambda'_{\nu \rho} \bm{u}_t \rangle .
\end{equation}

A solution that is not restricted to a particular subspace is obtained when $r=m_\nu$ and $\bm{u}_1, \cdots, \bm{u}_{m_\nu}$ is the canonical basis of $\mathbb{R}^{m_\nu}$. This solution is the one with the smallest error. We call it the \emph{optimal} solution.

However, it can be useful to seek a solution in a subspace of dimension $r \ll m_\nu$ because the associated system of linear equations will be of smaller dimension and, therefore, easier and faster to solve. Taking into account that the parameters $ \lambda'_{\nu 1},\cdots,\lambda'_{\nu n}$ are in fact the dominant eigenvalues of matrices $\bm{W'}_{\nu 1}, \cdots, \bm{W'}_{\nu n}$, we hypothesized that the optimal solution is close to the subspace spanned by the dominant eigenvectors of these matrices. This is clearly the case when $n=1$ and when the matrices share their dominant eigenspace.

To test this hypothesis, we randomly generated sets of $n$ matrices of dimension $m \times m$ and we compared the optimal solution with the solution that is restricted to the subspace spanned by the dominant eigenvectors, for different choices of $n$ and $m$. The results show that the error associated to the restricted solution is only slightly larger than that of the optimal solution (Fig.~\ref{fig optimal vs restricted solution}A) and that the two solutions are very similar (Fig.~\ref{fig optimal vs restricted solution}B), which suggests that the solution that is restricted to the subspace of dominant eigenvectors is a good approximation to the optimal solution. As expected, the two solutions coincide whenever $n=m$.

\begin{figure}[ht] 
\centering
\includegraphics[width=0.8\linewidth ]{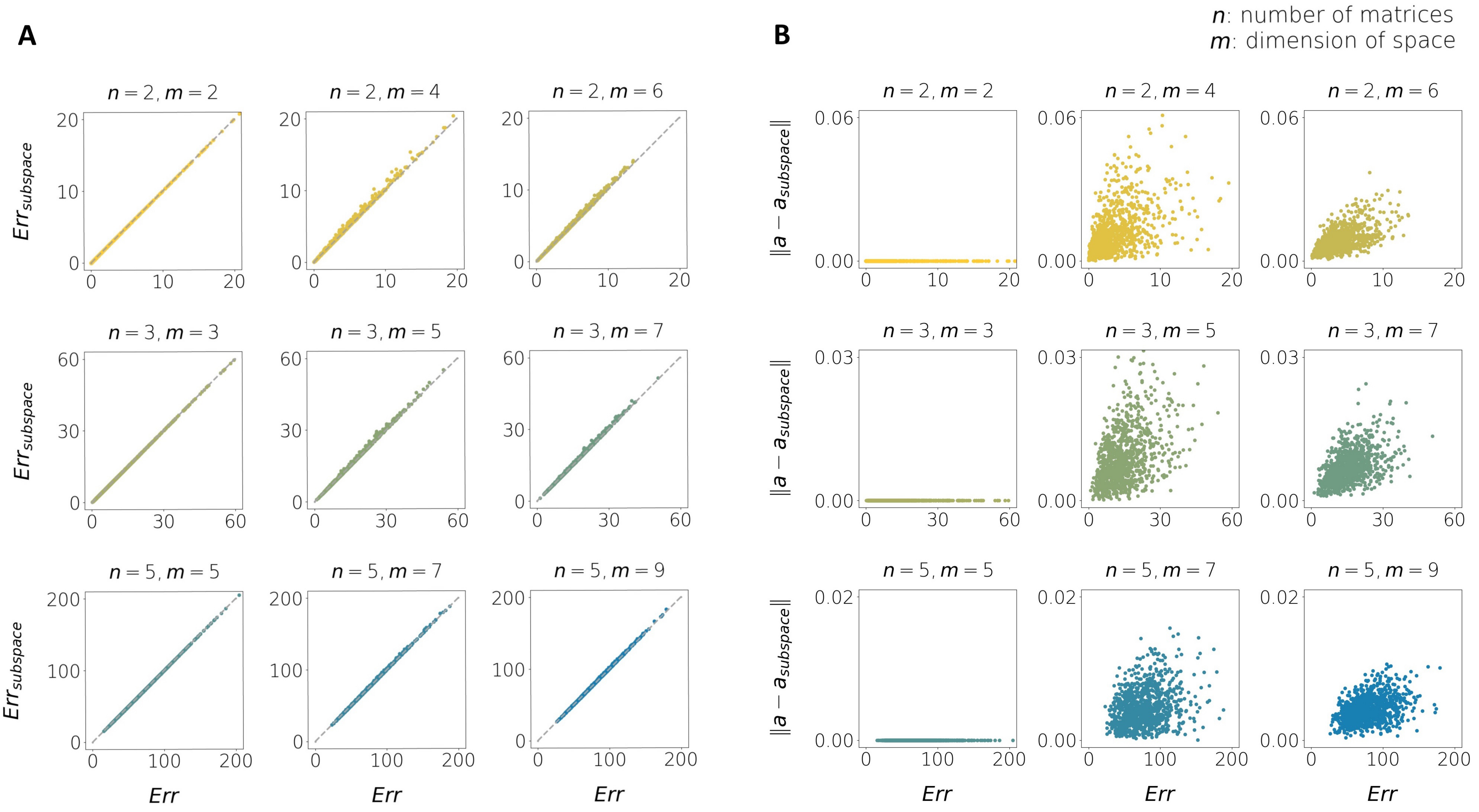}
\caption{\small Comparison of the optimal and restricted solutions to Problem \ref{problem}, defined by Eqs.~\eqref{a(x) app}),~\eqref{eq Cy}, the restricted solution being the one that lies in the subspace spanned by the dominant eigenvectors of the matrices involved. For each choice of $n$ (number of matrices) and $m$ (dimension of space), we randomly generated 1000 sets of $m \times m$ matrices, we found their dominant eigenvalues and both solutions were computed. For a fixed $n$, the elements of the $i$-th matrix were created independently within the range $(0, 1+5 i)$, $i \in \{1, \cdots, n\}$.
{\textbf A.} Error associated to the restricted solution ($Err_\text{subspace}$) versus error associated to the optimal solution ($Err$).
{\textbf B.} Euclidean distance between the two solutions ($\bm{a}$, $\bm{a}_\text{subspace}$) versus error associated to the optimal solution.
}
\label{fig optimal vs restricted solution}
\end{figure}

\section{Construction of heterogeneous networks with communities} \label{sec: app heterogeneous networks}

A heterogeneous network with block structure and in/out-degree variability is constructed as follows. First, a partition of the nodes into $n$ groups is defined. Let $m_\nu$ be the size of group $G_\nu$. For every ordered pair of group indices $(\nu,\rho)$, a parameter $p_{\nu \rho}$ defines the mean connection density of interactions from $G_\rho$ to $G_\nu$. Every node $i$ in $G_\nu$ is assigned a collection of hidden in/out-degrees from/to the other groups: 
$\bm{\kappa^{i,\textbf{in}}} = ( \kappa^{i,\text{in}}_1, \cdots, \kappa^{i,\text{in}}_n )$,
$\bm{\kappa^{i,\textbf{out}}} = ( \kappa^{i,\text{out}}_1, \cdots, \kappa^{i,\text{out}}_n )$
such that
\begin{equation}
\begin{array}{lll}
\langle \kappa^{i,\text{in}}_\rho \rangle &=& m_\rho \, p_{\nu \rho} \\
\langle \kappa^{i,\text{out}}_\rho \rangle &=& m_\rho \, p_{\rho \nu}.
\end{array}
\end{equation}
Once the hidden degrees are specified, a connection from node $j \in G_\rho$ to node $i \in G_\nu$ is created with probability
\begin{equation}
p_{ij} = \frac{ \kappa^{i,\text{in}}_\rho \, \kappa^{j,\text{out}}_\nu }{ m_\nu \, m_\rho \, p_{\nu \rho} }.
\end{equation}
The hidden degrees can follow any distribution provided that their expectation is the one specified above and that the hidden degrees of every node are independent of those of any other node. We can also incorporate a correlation between the hidden in- and out-degrees of single nodes. In our example networks they are uniformly distributed and the hidden in/out-degrees of a node from/to its own group are correlated with correlation coefficient $\rho_\text{in/out} = 0.8$.

\section{Partition refinement} \label{sec: app part refinement}

Given a network and a node partition, we refine the partition (that is, we split the existing groups into smaller subgroups) so that the weighted in/out-degree variability of nodes that are in the same subgroup is reduced. The process is as follows. We take two parameters $v_\text{in}, v_\text{out}$ that define the maximal in- and out-degree variability allowed in the new partition. This means that the new partition has to be such that all the weighted in- and out-degrees (coming from and ending at nodes in all the other groups) of two nodes that are in the same group can differ, at most, by $v_\text{in}$ and $v_\text{out}$, respectively. For this we first compute all the weighted in- and out-degrees of nodes in each group, coming and ending at all the other groups (i.e., for each node we have $n$ in-degrees and $n$ out-degrees, where $n$ is the number of groups in the original partition). Then, for each pair of groups $G_\nu$, $G_\rho$, we order the in(out)-degrees of nodes in $G_\nu$ from (to) $G_\rho$ and we classify these degrees into categories so that the difference between the minimal and maximal degree within each category is smaller than the desired threshold $v_\text{in}$ ($v_\text{out}$) and so that the number of categories is as small as possible. Now we classify all the nodes in $G_\nu$ according to these degree categories: two nodes end up in the same subgroup whenever all their degrees have fallen in the same category.

\section{Computing the bifurcation diagrams} \label{sec: app bif diagrams}

To compute the bifurcation diagram of a system, we first create a network instantiation or take a network from given data. If the network is binary, the adjacency matrix is converted into a positive matrix by setting all the missing connections to a very small value $\epsilon > 0$. We then vary the overall strength of connections by multiplying all the interaction weights by a common factor $d$. For each value of $d$ we compute the homogeneous and spectral reductions and we integrate both the original and the reduced dynamics to equilibrium. Once the equilibrium values of the $n$ observables (exact or reduced) are known, we compute their weighted average according to group size:
\begin{equation}
\langle \cal{X} \rangle := \frac{1}{N} \sum \limits_{\nu = 1}^n m_\nu \, \cal{X}_\nu.
\end{equation}
We also define $\cal{K}_\nu$ as the weighted in-degree of observable $\nu$ in the reduced system:
\begin{equation}
\cal{K}_\nu := \sum \limits_{\rho = 1}^n \cal{W}_{\nu \rho},
\end{equation}
and from it we can compute the average in-degree in the reduced system, weighted by the group size:
\begin{equation}
\langle \cal{K} \rangle := \frac{1}{N} \sum \limits_{\nu = 1}^n m_\nu \, \cal{K}_\nu.
\end{equation}
We finally generate a diagram that shows the state of the observable average $\langle \cal{X} \rangle$ (exact and reduced) at equilibrium as a function of  $\langle \cal{K} \rangle$.

\end{document}